\def\R{\mathbb R}
\def\N{\mathbb N}
\def\Z{\mathcal Z}
\def\F{\mathbf F}
\def\n{\mathbf n}
\def\m{\mathbf m}
\def\dmn{\mathrm{dmn}}
\def\ric{\mathrm{Ric}}
\def\area{\mathrm{Area}}
\def\d{\mathrm{div}}
\def\dist{\mathrm{dist}}
\newtheorem{theorem}{Theorem}[section]
\newtheorem{lemma}[theorem]{Lemma}
\newtheorem{corollary}[theorem]{Corollary}
\newtheorem{proposition}[theorem]{Proposition}
\theoremstyle{remark}
\newtheorem{rmk}{Remark}
\newtheorem{example}[theorem]{Example}
\newtheorem{claim}[]{Claim}
\theoremstyle{definition}
\newtheorem{definition}[theorem]{Definition}
\numberwithin{equation}{section}
\title[Min-max minimal hypersurfaces with free boundary]{Min-max minimal hypersurface in manifolds with convex boundary and ${\mathbf {\emph \rm{Ric}}\geq 0}$}
\author{Zhichao Wang}
\date{\today}
\address{School of Mathematical Sciences, Peking University
Yiheyuan Road 5, Beijing, P.R.China, 100871}
\email{wangzhichank@gmail.com}
\thanks{}
\begin{document}
\begin{abstract}
    Let $(M^{n+1},\partial M,g)$ be a compact manifold with non-negative Ricci curvature, convex boundary and $2\leq n\leq 6$. We show that the min-max minimal hypersurface with respect to one-parameter families of hypersurfaces in $(M,\partial M)$ is orientable, of index one and multiplicity one.
\end{abstract}

\maketitle
\section{Introduction}
In 1960s, Almgren \cites{Alm62, Alm65} initiated a variational theory to find minimal submanifolds. In those papers, he also conjectured that the Morse index of min-max solution is bounded by the number of parameters. There have been tremendous understanding of this conjecture in closed manifolds \cite{MN16}. However, the general index bounds for free boundary min-max minimal hypersurfaces still remains open in compact manifolds with non-empty boundary. In this paper, we address this problem for compact manifolds with certain natural convexity assumptions.
\begin{theorem}\label{main thm}
Let $(M^{n+1},\partial M,g)$ be any connected, compact, orientable manifold with convex boundary, non-negative Ricci curvature and $2\leq n\leq 6$. Then the min-max minimal hypersurface $\Sigma$ corresponding to the fundamental class $[M]$ is orientable of multiplicity one, index one and has least area among all embedded orientable free boundary minimal hypersurfaces.
\end{theorem}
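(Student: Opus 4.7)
The plan is to adapt the Marques--Neves--Zhou strategy for positive Ricci curvature closed manifolds to the free boundary setting under the weaker hypothesis $\ric \geq 0$ with convex $\partial M$. First, I would apply Almgren--Pitts min-max theory for relative cycles in manifolds with boundary (as developed by Li--Zhou and De Lellis--Ramic) to the one-parameter sweepout associated with $[M]$, producing a smooth embedded free boundary minimal hypersurface $\Sigma = \sum_i n_i \Sigma_i$ realizing the width $\mathbf{W} = \mathbf{W}([M]) > 0$, with each component of Morse index at most one. Regularity up to $\partial M$ uses convexity together with the dimension range $2 \leq n \leq 6$.

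The first substantive step is multiplicity one. Adapting Zhou's multiplicity one theorem to the free boundary setting, I would approximate the min-max sweepout by min-max families for the prescribed-mean-curvature functional $\mathcal{A}^c$ with small parameter $c$; for each $c$ the free boundary PMC min-max should yield an embedded free boundary PMC hypersurface of multiplicity one, and taking the limit $c \to 0$ gives a multiplicity-one minimal hypersurface of area $\mathbf{W}$, which must coincide with $\Sigma$ by the criticality. I expect the principal technical obstacle here to be the uniform curvature estimates and regularity of free boundary PMC hypersurfaces up to the convex boundary, since Zhou's original deformation and compactness arguments must be retooled to respect the boundary contact angle.

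With multiplicity one established, non-orientable components are excluded by rigidity. A non-orientable $\Sigma_i$ is one-sided in the orientable $M$, so its orientation double cover $\widetilde{\Sigma}_i$ is two-sided and carries a $\mathbb{Z}/2$-odd Jacobi direction descending from the covering involution. Testing the second variation of $\widetilde{\Sigma}_i$ with this direction together with $\ric \geq 0$ and convexity of $\partial M$ forces $\widetilde{\Sigma}_i$ to be totally geodesic with $\ric(\nu,\nu) \equiv 0$ along it; a Cheeger--Gromoll-type normal splitting then contradicts the covering being nontrivial. The same rigidity dichotomy applied to $\Sigma$ itself rules out index zero (since $\mathbf{W} > 0$), so $\Sigma$ has index exactly one.

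For the least-area property, let $\Sigma'$ be any embedded orientable free boundary minimal hypersurface. Orientability of $\Sigma'$ together with the relative homology sequence for $(M,\partial M)$ forces $\Sigma'$ to separate $M$ into $M_1, M_2$. I would sweep out $[M]$ by taking, for $t \in [0,\tfrac{1}{2}]$, the set $\Omega_t \subset M_1$ of points at distance at least $(\tfrac{1}{2}-t)L$ from $\Sigma'$ (for $L$ sufficiently large), and for $t \in [\tfrac{1}{2},1]$, $\Omega_t = M_1 \cup \{x \in M_2 : \dist(x,\Sigma') \leq (t-\tfrac{1}{2})L\}$. Mean curvature comparison using minimality of $\Sigma'$ and convexity of $\partial M$ yields that every interior slice has area at most $|\Sigma'|$, so $\mathbf{W}([M]) \leq |\Sigma'|$, and combined with $|\Sigma| = \mathbf{W}([M])$ this gives $|\Sigma| \leq |\Sigma'|$.
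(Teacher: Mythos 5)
Your outline is missing the paper's central quantitative mechanism and several of its steps would fail as stated. The least-area and multiplicity statements in the paper both come from one comparison: from \emph{every} free boundary minimal hypersurface one builds a sweepout of $(M,\partial M)$ all of whose slices have area at most $\area(\Sigma')$ (at most $2\area(\Sigma')$ in the non-orientable case), so the width is at most $\mathcal A(M,\partial M)$. Your proposed global sweepout by distance level sets does not deliver this: the sets $\{\dist(\cdot,\Sigma')=s\}$ are smooth hypersurfaces with controlled area only below the focal/cut radius; past the cut locus they need not be hypersurfaces with finitely many singular points, so they are not admissible generalized smooth families, and the mean curvature comparison giving area $\le\area(\Sigma')$ is exactly what breaks there. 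The paper uses the distance foliation only in a small collar (Lemma \ref{dist fctn}) and extends it by a contradiction argument: if every extension had maximal slice area $\ge\area(\Sigma')$, the De Lellis--Ramic min-max in the complement (after pushing sweepouts a definite distance from $\Sigma'$ with a vector field built from $\nabla d$, Theorem \ref{part sweepout}) would produce a second free boundary minimal hypersurface disjoint from $\Sigma'$, contradicting the Frankel property (Lemma \ref{intersection}). Also, separation of $M$ by an orientable $\Sigma'$ is not a consequence of orientability plus homology alone; the paper gets it by minimizing area in the relative homology class and invoking the nonexistence of stable free boundary minimal hypersurfaces. Finally, your claim that the min-max theory itself yields index at most one per component is not available: index upper bounds for free boundary Almgren--Pitts min-max are open (this is the point of the paper), and index $\le 1$ is instead proved a posteriori from width $=\mathcal A(M,\partial M)$ by a two-parameter deformation along a second negative direction.

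The orientation and multiplicity parts of your plan also contain genuine gaps. Invoking a free boundary analogue of Zhou's prescribed-mean-curvature multiplicity-one theorem is not a proof step here: no such theorem is available in this setting, and developing it is a separate substantial project, far beyond a ``technical obstacle.'' More seriously, the rigidity argument you propose for excluding non-orientable components is incorrect: for a one-sided minimal hypersurface the admissible normal variations correspond to functions on the orientation double cover that are \emph{odd} under the deck involution, and with $\ric\ge 0$ and convex boundary there is no canonical odd test function forcing negative second variation (compare $\RP^2\subset\RP^3$, a stable one-sided minimal surface even in positive curvature); no totally geodesic or splitting conclusion follows, and the min-max hypersurface is not stable to begin with. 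The paper proceeds differently: it proves that a non-orientable component must occur with even multiplicity (Proposition \ref{even multiple}, via locally mass-minimizing replacements, the compactness theorem for relative cycles, and White's theorem comparing integral currents with flat chains mod $2$), hence multiplicity exactly two by the width bound, and then excludes the multiplicity-two non-orientable limit by a Ketover--Marques--Neves cylinder/catenoid modification of the sweepout (Proposition \ref{rule out}), whose effectiveness rests on the strictly negative second free variation coming from $h^{\partial M}(\n,\n)>0$. Without these ingredients, orientability, multiplicity one, and index one all remain unproven in your outline.
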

\begin{rmk}\label{remark1}
Under our assumptions of $(M,\partial M)$ in Theorem \ref{main thm}, there are no closed minimal hypersurfaces in $(M,\partial M)$ (see Appendix A). Hence all critical hypersurfaces of the area functional are minimal hypersurfaces with non-empty free boundary.
\end{rmk}
\begin{rmk}
Manifolds with positive Ricci curvature has been studied in a lot of papers \cites{FL, MN12, MN17, Zho, Zho17}. The remarkable results by Marques-Neves \cite{MN17} and Li-Zhou \cites{LZ} said that there are infinity many free boundary minimal hypersurfaces in these manifolds. In this paper, the conditions of non-negative Ricci curvature and the convex boundary are used to show:
\begin{itemize}
  \item the non-existence of the two-sided stable free boundary minimal hypersurfaces;
  \item the fact that any two immersed free boundary minimal hypersurfaces must intersect;
  \item the existence of local foliation with non-negative mean curvature;
  \item the second variation of the free boundary minimal hypersurface along the unit normal vector field to be negative, which would be crucial for ruling out the non-orientable case.
\end{itemize}
\end{rmk}
\begin{rmk}
As another surprising fact, we obtain the existence of a least area guy among all free boundary minimal hypersurface. This would follow straightforwardly if one had smooth compactness among all free boundary minimal hypersurfaces; however, all known compactness results require additional assumptions. For instance, Ambrozio-Carlotto-Sharp \cite[Theorem 2]{ACS} established the compactness of the free boundary minimal hypersurfaces in above settings under the condition of the first eigenvalues to be bounded. In dimension 3, Fraser-Li \cite[Theorem 1.2]{FL} proved the compactness of the space of the compact, properly embedded, free boundary minimal surfaces with fixed topology.
\end{rmk}

\vspace{1em}
Minimal submanifolds play important roles in mathematics for a long time, as they appear in a wide range of fields. However, the existence of minimal submanifolds puzzled mathematicians for hundreds of years. Before Almgren \cites{Alm62, Alm65}, mathematicians always need some topological constraints to show the existence of minimal surfaces. Almgren initiated a variational theory to find minimal submanifolds in any compact manifolds. Using this theory, he could prove the existence of a weak solution (as stationary varifold). For a closed manifold $M^{n+1}$, the regularity of the submanifold was improved by Pitts \cite{Pit} for $n\leq 5$, and Schoen-Simon \cite{SS} for $n=6$.

In compact manifolds with boundary, Gr\"{u}ter-Jost \cite{GJ}, De Lellis-Ramic \cite{DR} established the regularity for the free boundary problem when the boundary is convex. Li-Zhou \cite{LZ} proved the general regularity theorem for any compact manifold with boundary.

\vspace{0.5em}
It is also very natural to study the geometric properties of the min-max minimal hypersurfaces. For three-manifolds, Pitts-Rubinstein conjectured that the min-max minimal surface from one-parameter families should have index less than or equal to one. For any $S^3$ with positive Ricci curvature, Marques-Neves \cite{MN12} studied the min-max minimal surfaces from one-parameter families and obtained rigidity results. For general closed manifolds with positive Ricci curvature, Zhou \cite{Zho} proved the index bounds for min-max minimal hypersurfaces from one-parameter families in manifolds with positive Ricci curvature. Zhou \cite{Zho17} also characterized the min-max minimal hypersurfaces from one-parameter families for closed manifolds with high dimensions. Ketover-Marques-Neves \cite[Theorem 2.4]{KMN} improved Zhou's results by the Catenoid estimates. Related results have also been proved for the least area closed minimal hypersurfaces by Mazet-Rosenberg \cite{MR} and Song \cite{Son}.

In the theory of minimal surfaces, Morse index always provides a useful way to show the rigidity of the minimal hypersurfaces. In the proof of Willmore conjecture \cite{MN14}, Marques-Neves proved that the min-max minimal surface in $S^3$ from the canonical 5-parameter families has index 5, and then must be the Clifford torus by Urbano \cite{Urb}. More interesting relations between Morse index and topology of the minimal hypersurfaces have been obtained by \cites{Ros, CM, ACS16, ACS16_2}. It is also very interesting to know how large the Morse index of the hypersurfaces from the k-parameter families can be. Marques-Neves \cite{MN16} proved that the index $\leq k$ for any min-max minimal hypersurfaces from $k$-parameter families. Our results in this paper imply the existence of the index one free boundary minimal hypersurfaces in the manifolds under our assumptions.

\vspace{1em}
The main idea is as follows. The first part is inspired by Marques-Neves \cite[Theorem 2.1]{MN12} and Zhou \cite[Theorem 1.1]{Zho}. Given any compact manifold $(M,\partial M,g)$ as in Theorem \ref{main thm}, we first embed each free boundary minimal hypersurface $\Sigma$ into a good one-parameter family of hypersurfaces. To do this, we show the existence of a good local foliation around $\Sigma$ and then extend the foliation to be a sweepout. Comparing to the closed case, here we can not use the exponential map to construct the foliation since the exponential map is not well-defined near boundary. Instead, we use  the level sets of the distance function to the hypersurface  as the local foliation. Here a new free variation formula in \S \ref{two variations} is essentially used. In order to show this local foliation could be extended, we use a contradiction argument; if not true, the continuous min-max theory by De Lellis-Ramic \cite{DR} adapted to a half space would give another free boundary minimal hypersurface. We will reach a contradiction with Frankel's property if the new free boundary minimal hypersurface won't intersect $\Sigma$. This non-intersecting property follows by using $\Sigma$ as a barrier. This means that each foliation could be extended to be a good sweepout.

Next we would like to discretize all the families in order to use the discrete Almgren-Pitts theory. This follows from Li-Zhou \cite{LZ} directly. Then by the free boundary min-max theory \cite[Theorem 5.21, Theorem 6.2]{LZ}, we can get a free boundary minimal hypersurface with least area, which may be orientable of multiplicity one or non-orientable. To rule out the non-orientable case, we first show that the multiplicity of non-orientable part must be even. Recall the construction of sweepouts from non-orientable hypersurface, we show the multiplicity of non-orientable min-max hypersurface is exactly two. If the non-orientable case happened, inspired by the work of Ketover-Marques-Neves, we can add a cylinder (catenoid for $n=2$) to the sweepout to reduce the maximal area among all hypersurfaces, and hence get a contradiction. The key point here is the area expansion has non-zero second order term by the free variation formula in \S \ref{two variations}.

\vspace{1em}
The paper is organized as follows. In Section \ref{Preliminary}, we first derive two free variation formulas and use them to show the existence of good neighborhood of free boundary minimal hypersurfaces. In Section \ref{continuous construction}, we construct sweepouts from any free boundary minimal hypersurfaces in the continuous settings. In Section \ref{discrete settings}, we introduce the Almgren-Pitts min-max theory for compact manifolds with boundary, which is developed by Li-Zhou \cite{LZ}. In Section \ref{Discretization}, the sweepouts from Section 3 will be discretized to be continuous in the mass norm. Then we will show all the discretized sweepouts are in the homotopy class corresponding to the fundamental class (Theorem \ref{homotopy}). In Section \ref{Multiplicity and Orientation}, we characterize the multiplicity and orientation of the min-max hypersurfaces. Finally, we prove our main result in Section \ref{main proof}.

\vspace{1em}
{\bf Acknowledgements:} The author would like to thank his advisor Prof. Gang Tian for constant encouragement and support. The author would also like to thank Prof. Xin Zhou for suggesting the problem and many helpful discussions. This work was done while the author was visiting the Department of Mathematics at MIT, supported by China Scholarship Council (File No. 201606100023). The author would like to thank Prof. Bill Minicozzi for encouragement and Xiaomeng Xu for the help in writing. The author would also like to thank the Department of Mathematics at MIT for its hospitality and for providing a good academic environment.

\section{Preliminaries}\label{Preliminary}
For any hypersurface in some closed manifold, one can always obtain variation from isotopy of the ambient manifold. In $(M,\partial M,g)$, a compact manifold with boundary, we can also get variation of hypersurface $\Sigma$ from isotopy of $(M,\partial M,g)$. In this case, the vector field corresponding to the isotopy need to satisfies $X|_{\partial M}\in T(\partial M)$, denoted by $\mathfrak X(M,\partial M)$. However, there are some variations which are not easy to find the vector fields.
\begin{example}
Let $(M,\partial M,g)$ be some convex ball in $\R^3$, and $\Sigma$ be the intersection of $M$ and some plane $P\subset\R^3$. Then $\Sigma$ separates $M$ into two pieces, called $M^+$ and $M^-$. Let $r$ be the distance function to $\Sigma$ with $r|_{M^+}\geq 0$ and $r|_{M^-}\leq 0$. Then $\{r^{-1}(t)\}_{t\in(-1,1)}$ is a variation of $\Sigma$. However, it is difficult find the variation vector field in $\mathfrak X(M,\partial M)$. On the other hand,
\begin{equation}
\Sigma_t=L_t(P)\cap M,
\end{equation}
where $L_t$ is the parallel moving by constant vector field.
\end{example}

In this section, we study the area of this kind of variation, which will be used later.

\subsection{Two Variation Formulas}\label{two variations}
Let $(M,\partial M,g)$ be some compact manifold with boundary. We always embed $(M,\partial M, g)$ into some closed manifold $(\tilde M,\tilde g)$. Let $X$ be a vector field on $\tilde M$. Then there exists a family of diffeomorphisms $(F_t)_{0\leq t\leq 1}$ generated by $X$. For any hypersurface $\tilde\Sigma$, $F_t(\tilde\Sigma)$ is a hypersurface for $t$ small enough. Set
\begin{gather*}
    \tilde\Sigma_t=F_t(\tilde\Sigma),\\
    \Sigma=\tilde\Sigma\cap M,\\
    \Sigma_t=F_t(\tilde\Sigma)\cap M.
\end{gather*}
We call $\{\Sigma_t\}$ a \emph{free variation of $\Sigma$ by} $X$.  In this part, we show
\begin{lemma}[The first free variation formula]\label{first variation}
\begin{equation}
\frac{d}{dt}\area(\Sigma_t)=\int_{\Sigma_t} H\langle \n,X\rangle+\int_{\partial\Sigma_t}\big(\langle X,\nu\rangle-\frac{\langle X,\nu_{\partial M}\rangle}{|\nu_{\partial M}^\top|}\big),
\end{equation}
where $\n$ is the unit normal vector field on $\Sigma_t$, $H$ is the mean curvature of $\Sigma_t$ in $M$, $\nu$ is the outward normal vector field of $\partial\Sigma_t$ in $M$, $\nu_{\partial M}$ is the outward unit normal vector field of $\partial M\subset M$, and $\nu_{\partial M}^\top$ is the projection to $T(\partial\Sigma_t)$.
\end{lemma}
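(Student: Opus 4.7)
The plan is to reduce to the classical first variation formula by reparametrizing $\Sigma_t$ through a diffeomorphism of $\Sigma$ that respects the boundary constraint $\partial\Sigma_t\subset\partial M$. Concretely, for $t$ small I would construct a smooth family $\phi_t\colon\Sigma\to\Sigma_t$ with $\phi_t(\partial\Sigma)=\partial\Sigma_t$ as follows: away from $\partial\Sigma$ set $\phi_t=F_t|_\Sigma$, while near $\partial\Sigma$ use the transversality of $\Sigma$ with $\partial M$ and the implicit function theorem to define $\phi_t(p)$ as the unique nearby point of $\tilde\Sigma_t\cap\partial M$ for each $p\in\partial\Sigma$; these two prescriptions glue via a cutoff in a collar of $\partial\Sigma$. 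Let $Y(p)=\tfrac{d}{dt}\big|_{t=0}\phi_t(p)$ be the resulting variation vector field; it is a section of $T\tilde M$ along $\Sigma$ that is tangent to $\partial M$ along $\partial\Sigma$.

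The classical first variation formula for a smooth variation of a hypersurface with boundary then yields
\[
\frac{d}{dt}\Big|_{t=0}\area(\Sigma_t)=\int_{\Sigma}H\langle\n,Y\rangle+\int_{\partial\Sigma}\langle Y,\nu\rangle,
\]
so it remains to identify the two integrands with those of the lemma. For the interior term, $\phi_t(p)\in\tilde\Sigma_t=F_t(\tilde\Sigma)$ forces $Y(p)-X(p)\in T_p\tilde\Sigma=T_p\Sigma$ at every $p\in\Sigma$, hence $\langle\n,Y\rangle=\langle\n,X\rangle$. For the boundary term, at $p\in\partial\Sigma$ write $Y-X=a\nu+W^{\|}$ with $a\in\R$ and $W^{\|}\in T_p\partial\Sigma$; the constraint $Y\in T_p\partial M$, together with $W^{\|}\in T_p\partial M$ (since $T\partial\Sigma\subset T\partial M$), yields $\langle X+a\nu,\nu_{\partial M}\rangle=0$, so $a=-\langle X,\nu_{\partial M}\rangle/\langle\nu,\nu_{\partial M}\rangle$. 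Because $\nu_{\partial M}$ is perpendicular to $T\partial\Sigma$, its projection onto $T_p\Sigma$ lies along $\nu$ and equals $\langle\nu,\nu_{\partial M}\rangle\nu$, so $|\nu_{\partial M}^{\top}|=\langle\nu,\nu_{\partial M}\rangle$, positive under the outward-pointing conventions whenever $\Sigma\pitchfork\partial M$. Substituting gives $\langle Y,\nu\rangle=\langle X,\nu\rangle+a=\langle X,\nu\rangle-\langle X,\nu_{\partial M}\rangle/|\nu_{\partial M}^{\top}|$, matching the claimed boundary integrand. Observe that the answer is independent of the choice of $\phi_t$: any tangential-to-$\partial\Sigma$ ambiguity in $W^{\|}$ leaves $\langle Y,\nu\rangle$ unaffected.

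The only delicate step is the construction of $\phi_t$ and the verification that its velocity at $\partial\Sigma$ has precisely the form described above. This rests on the transversality of $\Sigma$ with $\partial M$, which ensures $\langle\nu,\nu_{\partial M}\rangle\neq 0$ and hence that $\tilde\Sigma_t\cap\partial M$ persists as a smooth codimension-two submanifold of $\tilde M$ for all small $t$, justifying the implicit-function-theorem step used to define $\phi_t$ on the boundary.
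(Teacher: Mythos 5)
Your argument is correct, but it takes a genuinely different route from the paper. The paper never reparametrizes: it writes $\area(\Sigma_t)$ as the $s\to 0$ limit of $I(s,t)=\int_{\tilde\Sigma_t}\phi(r/s)\,d\mu$, where $r$ is the signed distance to $\partial M$, differentiates in $t$ on the closed hypersurface $\tilde\Sigma_t$, converts the cutoff term into a boundary term $-\int_{\partial\Sigma_t}\langle X,\nabla r\rangle/|(\nabla r)^{\top}|$ via the co-area formula, and then justifies exchanging the limit $s\to0$ with the difference quotient through uniform estimates; the identity $\nabla r|_{\partial M}=\nu_{\partial M}$ and the tangential divergence theorem then give the stated formula. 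You instead build a family of diffeomorphisms $\phi_t\colon\Sigma\to\Sigma_t$ respecting $\partial\Sigma_t\subset\partial M$, invoke the classical first variation formula for the velocity field $Y$, and identify the integrands: the observations that $Y-X\in T\tilde\Sigma$ and $Y\in T\partial M$ along $\partial\Sigma$, together with $|\nu_{\partial M}^{\top}|=\langle\nu,\nu_{\partial M}\rangle$ (the projection being onto $T\Sigma_t$, which is clearly the intended reading of the paper's statement), do yield exactly the claimed boundary correction, and your remark that the answer is independent of the residual freedom in $\phi_t$ is the right consistency check. What each approach buys: yours is shorter and makes the geometric meaning of the term $\langle X,\nu_{\partial M}\rangle/|\nu_{\partial M}^{\top}|$ transparent (it is the conormal component of the tangential adjustment needed to keep the boundary on $\partial M$), but it leans on the transversality of $\tilde\Sigma_t$ with $\partial M$ and on carrying out the implicit-function-theorem/gluing construction of $\phi_t$ carefully (cleanest is $\phi_t=F_t\circ\psi_t$ with $\psi_t\colon\Sigma\to\tilde\Sigma\cap F_t^{-1}(M)$ built in a collar); the paper's cutoff/co-area argument, while more technical about interchanging limits, needs no reparametrization and sets up precisely the machinery ($J(s,t)$) that is reused verbatim for the second free variation formula in Lemma 2.3. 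Note that transversality is not an extra hypothesis you are sneaking in: the paper's formula already presupposes it, since $|\nu_{\partial M}^{\top}|$ appears in a denominator and the co-area step degenerates without it.
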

\begin{rmk}
Here $X|_{\partial M}$ may not be in $T(\partial M)$.
\end{rmk}

\begin{proof}[Proof of Lemma \ref{first variation}]
Let $r$ be the distance function to $\partial M$ such that $r(x)<0$ for $x\in M$ and $r(x)>0$ if $x\notin M\cup\partial M$. Then $\nabla r|_{\partial M}=\nu_{\partial M}$. Now take a non-increasing cut-off function $\phi:(-\infty,+\infty)\rightarrow[0,1]$ satisfying
\begin{equation}
\phi(r)=\left\{\begin{aligned}
        &1, \mathrm{\ if} &r\leq 0\\
        &0, \mathrm{\ if} &r\geq 1
        \end{aligned}\right..
\end{equation}
Set
\begin{equation}
    \varphi(s,r)=\phi(\frac{r}{s}).
\end{equation}
Then for any $\tilde\Sigma_t$, set
\begin{equation}\label{def of I}
    I(s,t)=\int_{\tilde\Sigma_t}\varphi(s,r)d\mu(x).
\end{equation}
Then we have
\begin{align*}
\frac{\partial}{\partial t}I(s,t)&=\int_{\tilde\Sigma_t}\varphi\d X+\int_{\tilde\Sigma_t}\frac{1}{s}\phi'(\frac{r}{s})\langle X,\nabla r\rangle\\
                                 &=\int_{\tilde\Sigma_t}\varphi\d X+\int_{\tilde\Sigma_t\cap(\mathcal B_s\setminus\mathcal B_0)}\frac{1}{s}\phi'(\frac{r}{s})\langle X,\nabla r\rangle\\
                                 &=\int_{\tilde\Sigma_t}\varphi\d X+\int_0^s\big(\int_{\tilde\Sigma_t\cap\partial\mathcal B_r}\frac{1}{s}\phi'(\frac{r}{s})\frac{\langle X,\nabla r\rangle}{|(\nabla r)^\top|}\big)dr,
\end{align*}
where $\mathcal B_s=\{x\in\tilde M:r(x)<s\}$. In the last equality, we used the Co-area Formula \cite[\S 12.7]{Sim}. Now for any $\epsilon>0$, $t\geq0$, there exists $\delta=\delta(\tilde M,\tilde\Sigma,X,\epsilon,t)>0$, such that for any $t',r$ satisfying $|t'-t|+r\leq\delta$,
\begin{gather*}
    \Big|\int_{\tilde\Sigma_{t'}\cap\partial\mathcal B_r}\frac{\langle X,\nabla r\rangle}{|(\nabla r)^\top|}-\int_{\partial\Sigma_t}\frac{\langle X,\nabla r\rangle}{|(\nabla r)^\top|}\Big|<\epsilon,\\
    \Big|\int_{\Sigma_{t'}}\varphi\d X-\int_{\Sigma_t}\varphi\d X\Big|<\epsilon.
\end{gather*}
For any $t,\alpha,s$, there exist $t'\in[t,t+\alpha]$, which may rely on $s$, such that
\begin{align*}
    \ &\Big|\frac{I(s,t+\alpha)-I(s,t)}{\alpha}-\int_{\Sigma_t}\varphi(s,t)\d X+\int_{\partial\Sigma_t}\frac{\langle X,\nabla r\rangle}{|(\nabla r)^\top|}\Big|\\
    =&\Big|\frac{\partial}{\partial t}I(s,t')-\int_{\Sigma_t}\varphi(s,t)\d X+\int_{\partial\Sigma_t}\frac{\langle X,\nabla r\rangle}{|(\nabla r)^\top|}\Big|\\
    \leq&\Big|\int_0^s\Big(\frac{1}{s}\phi'(\frac{r}{s})\big(\int_{\tilde\Sigma_{t'}\cap\partial\mathcal B_r}\frac{\langle X,\nabla r\rangle}{|(\nabla r)^\top|}-\int_{\partial\Sigma_t}\frac{\langle X,\nabla r\rangle}{|(\nabla r)^\top|}\big)\Big)\Big|\\
    \ \ &+\Big|\int_{\Sigma_{t'}}\varphi\d X-\int_{\Sigma_t}\varphi\d X\Big|
\end{align*}
Hence if we choose $|\alpha|+s\leq\delta$,
\begin{equation}\label{uniform estimate}
    \Big|\frac{I(s,t+\alpha)-I(s,t)}{\alpha}-\int_{\Sigma_t}\varphi(s,t)\d X+\int_{\partial\Sigma_t}\frac{\langle X,\nabla r\rangle}{|(\nabla r)^\top|}\Big|\leq 2\epsilon.
\end{equation}
Here we use $\phi'\leq 0$ and $\int_0^s\frac{1}{s}\phi'(\frac{r}{s})dr=-1$. Recall the definition of $I(s,t)$ (\ref{def of I}), we have
\begin{equation}
\lim_{s\rightarrow 0}I(s,t)=\area(\Sigma_t).
\end{equation}
Hence
\begin{equation}
    \frac{d}{dt}\area(\Sigma_t)=\lim_{\alpha\rightarrow 0}\lim_{s\rightarrow 0}\frac{I(s,t+\alpha)-I(s,t)}{\alpha}.
\end{equation}
By (\ref{uniform estimate}),
\begin{align*}
    \frac{d}{dt}\area(\Sigma_t)&=\lim_{s\rightarrow 0}\int_{\tilde\Sigma_t}\varphi\d X-\int_{\partial\Sigma_t}\frac{\langle X,\nabla r\rangle}{|(\nabla r)^\top|}\\
                               &=\int_{\Sigma_t}\d X-\int_{\partial\Sigma_t}\frac{\langle X,\nabla r\rangle}{|(\nabla r)^\top|}\\
                               &=\int_{\Sigma_t}H\langle X,\n\rangle+\int_{\partial\Sigma_t}\big(\langle X,\nu\rangle-\frac{\langle X,\nu_{\partial M}\rangle}{|\nu_{\partial M}^\top|}\big).
\end{align*}
\end{proof}

\begin{rmk}
Since we can choose the $\tilde M$ freely, it is no meaningful to discuss the parallel variation. That is, we only consider the $X|_\Sigma=f\n$. In that case,
\begin{equation}
\frac{d}{dt}\area(\Sigma_t)=\int_{\Sigma_t} H\langle \n,X\rangle-\int_{\partial\Sigma_t}\frac{\langle X,\nu_{\partial M}\rangle}{|\nu_{\partial M}^\top|},
\end{equation}
and the critical manifold is also the free boundary (possible empty) minimal hypersurface.
\end{rmk}
\begin{lemma}[The second free variation formula]\label{second variation}
In the case of free boundary minimal hypersurface and $X|_{\Sigma}=f\n$, the second variation is
\begin{equation}
    \frac{d^2}{dt^2}\area(\Sigma_t)\Big|_{t=0}=\int_{\Sigma}|\nabla f|^2-|A|^2f^2-\ric(X,X)-\int_{\partial\Sigma}h^{\partial\Sigma}(X,X).
\end{equation}
\end{lemma}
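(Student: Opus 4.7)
The plan is to differentiate the first variation identity of Lemma~\ref{first variation} once more in $t$ at $t=0$, using the free boundary minimality of $\Sigma$ to kill every integrand pointwise. At $t=0$ the interior piece vanishes because $H\equiv 0$; for the boundary piece, the orthogonality $\Sigma\perp\partial M$ forces $\nu_{\partial M}\in T\Sigma$ along $\partial\Sigma$, so $\nu=\nu_{\partial M}$, $|\nu_{\partial M}^\top|=1$, and $\langle X,\nu_{\partial M}\rangle=\langle f\n,\nu_{\partial M}\rangle=0$. Hence at second order all contributions from the moving domain $\partial\Sigma_t$ drop out and only the $t$-derivatives of the integrands survive.

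For the interior, the classical first variation of mean curvature under normal velocity $f\n$ reads $\dot H|_{t=0}=-\bigl(\Delta_\Sigma f+(|A|^2+\ric(\n,\n))f\bigr)$, so after integration by parts on $\Sigma$,
$$\frac{d}{dt}\Big|_{t=0}\int_{\Sigma_t}H\langle\n,X\rangle=\int_\Sigma|\nabla f|^2-\bigl(|A|^2+\ric(\n,\n)\bigr)f^2-\int_{\partial\Sigma}f\partial_\nu f.$$

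For the boundary, the key algebraic observation is that because $\nu_{\partial M}\perp T\partial M\supset T(\partial\Sigma_t)$, its projection onto $T\Sigma_t$ is proportional to $\nu_t$, giving $\nu_{\partial M}^\top=\langle\nu_{\partial M},\nu_t\rangle\nu_t$ and the identity
$$\langle X,\nu_t\rangle-\frac{\langle X,\nu_{\partial M}\rangle}{|\nu_{\partial M}^\top|}=-\frac{\langle\nu_{\partial M},\n_t\rangle}{\langle\nu_{\partial M},\nu_t\rangle}\langle X,\n_t\rangle.$$
Differentiating at $t=0$ (where $\langle\nu_{\partial M},\n_0\rangle=0$, $\langle\nu_{\partial M},\nu_0\rangle=1$ and $\langle X,\n_0\rangle=f$) reduces the whole boundary $t$-derivative to $-f\cdot\frac{d}{dt}|_{t=0}\langle\nu_{\partial M},\n_t\rangle$. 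The Eulerian formula $\dot\n|_{t=0}=-\nabla_\Sigma f$ for the unit normal of a normal variation, together with $\frac{d}{dt}\nu_{\partial M}(\gamma(t))|_{t=0}=f\nabla_\n\nu_{\partial M}$ along the boundary trajectory $\gamma$ of velocity $f\n$, then evaluates the contribution to $-f^2 h^{\partial M}(\n,\n)+f\partial_\nu f$.

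Summing the two pieces, the $\pm f\partial_\nu f$ boundary terms cancel exactly, producing the claimed identity with $\ric(X,X)=f^2\ric(\n,\n)$ and the understanding that $h^{\partial\Sigma}(X,X)=h^{\partial M}(X,X)=f^2 h^{\partial M}(\n,\n)$. The main technical obstacle is the identification $\nu_{\partial M}^\top/|\nu_{\partial M}^\top|=\nu_t$, which collapses the two independently evolving moving normals into a single scalar ratio and exposes the cancellation of the boundary $f\partial_\nu f$; without it the separate $t$-derivatives of $\langle X,\nu_t\rangle$ and $\langle X,\nu_{\partial M}\rangle/|\nu_{\partial M}^\top|$ generate shape-operator pieces for both $\Sigma$ and $\partial M$ that must be reconciled by hand.
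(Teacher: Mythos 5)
Your proposal is correct and reaches the paper's formula (including the implicit reading that $h^{\partial\Sigma}$ means $h^{\partial M}$ and that $\nu_{\partial M}^\top$ is the projection onto $T\Sigma_t$ rather than onto $T(\partial\Sigma_t)$), but it executes the computation along a genuinely different route than the paper. The paper does not differentiate the geometric form of the first variation; it goes back to the regularized functional $J(s,t)=\int_{\tilde\Sigma_t}\varphi\,\d X-\int_{\partial\Sigma_t}\langle X,\nabla r\rangle/|(\nabla r)^\top|$, differentiates in $t$ at $t=0$, kills the cutoff terms using $\d X|_\Sigma=0$ and $\langle X,\nabla r\rangle|_{\partial\Sigma}=0$, and then gets the interior terms wholesale from the classical identity $\int_\Sigma\nabla_X(\d X)=\int_\Sigma|\nabla f|^2-|A|^2f^2-\ric(X,X)+\int_{\partial\Sigma}\langle\nabla_XX,\nu\rangle$, with the boundary curvature arising from $\nabla_X\langle X,\nabla r\rangle=\langle\nabla_XX,\nu\rangle+\mathrm{Hess}\,r(X,X)$, so the cancellation you see between the two $f\partial_\nu f$ terms appears there as cancellation of the $\langle\nabla_XX,\nu\rangle$ terms, and the interchange of $t$-differentiation with the $s\to0$ limit is handled by the same uniform estimate as in Lemma \ref{first variation}. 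Your route instead differentiates the conclusion of Lemma \ref{first variation} directly, using the linearizations $\dot H=-(\Delta_\Sigma f+(|A|^2+\ric(\n,\n))f)$ and $\dot\n=-\nabla_\Sigma f$ together with the conormal decomposition $\nu_{\partial M}=\langle\nu_{\partial M},\nu_t\rangle\nu_t+\langle\nu_{\partial M},\n_t\rangle\n_t$, which neatly collapses the boundary integrand; the price is that you must justify differentiating integrals over the moving domains $\Sigma_t$ and $\partial\Sigma_t$ (which is fine here because both integrands vanish pointwise at $t=0$, so the motion of the domain and the $O(t^2)$ discrepancy between $F_t(\partial\Sigma)$ and $\partial\Sigma_t$ contribute nothing at first order), a point you should state explicitly but which is at the same level of rigor as the paper's own "use the same argument in Lemma \ref{first variation}". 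In exchange, your argument avoids re-running the $s$-regularization and makes the geometric origin of each term (linearized mean curvature, tilting of the normal, shape operator of $\partial M$ via $\mathrm{Hess}\,r$) more transparent, while the paper's version avoids ever computing $\dot H$ or $\dot\n$.
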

\begin{proof}
Set
\begin{equation}
    J(s,t):=\int_{\tilde\Sigma_t}\varphi\d X-\int_{\partial\Sigma_t}\frac{\langle X,\nabla r\rangle}{|(\nabla r)^\top|}.
\end{equation}
Hence
\begin{align*}
    \frac{\partial}{\partial t}J(s,0)&=\int_{\tilde\Sigma}\varphi\cdot(\d X)^2+\frac{\langle X,\nabla r\rangle}{s}\phi'(\frac{r}{s})\d X+\varphi\frac{d}{dt}(\d X) \\
    &\ -\int_{\partial\Sigma}\frac{d}{dt}\Big|_{t=0}\frac{\langle X,\nabla r\rangle}{|(\nabla r)^\top|}\\
    &=\int_{\tilde\Sigma}\varphi\cdot(\d X)^2+\varphi\frac{d}{dt}(\d X)-\int_{\partial\Sigma}\frac{d}{dt}\Big|_{t=0}\frac{\langle X,\nabla r\rangle}{|(\nabla r)^\top|} \\
    &\ +\int_0^s\int_{\partial\Sigma\cap(\mathcal B_s\setminus \mathcal B_0)}\frac{1}{s}\phi'(\frac{r}{s})\frac{\langle X,\nabla r\rangle}{|(\nabla r)^{\top}|}\d X
\end{align*}
Since $X$ is the normal variation vector field of $\tilde\Sigma$, and $\Sigma$ is a free boundary minimal hypersurface of $M$,
\begin{gather}
    \d X|_{\Sigma}=0,\\
    \langle X,\nabla r\rangle|_{\partial\Sigma}=0.
\end{gather}
These imply
\begin{align*}
    \lim_{s\rightarrow 0}\frac{\partial}{\partial t}J(s,0)&=\int_{\Sigma}\nabla_X(\d X)-\int_{\partial\Sigma}\nabla_X\langle X,\nabla r\rangle\\
                                                          &=\int_{\Sigma}|\nabla f|^2-|A|^2f^2-\ric(X,X)+\int_{\partial\Sigma}\langle\nabla_XX,\nu\rangle\\
                                                          &\ -\int_{\partial\Sigma}\nabla_X\langle X,\nabla r\rangle\\
                                                          &=\int_{\Sigma}|\nabla f|^2-|A|^2f^2-\ric(X,X)-\int_{\partial\Sigma}h^{\partial\Sigma}(X,X),
\end{align*}
where $h^{\partial M}$ is the second fundamental form of $\partial M$ with normal vector field $\nu_{\partial M}$. Next use the same argument in Lemma \ref{first variation}, we can show that
\begin{equation}
\frac{d^2}{dt^2}\area(\Sigma_t)\Big|_{t=0}=\lim_{s\rightarrow 0}\frac{\partial}{\partial t}J(s,0).
\end{equation}
This completes our proof.
\end{proof}

\begin{corollary}\label{variation}
Let $\Sigma$ be a free boundary minimal hypersurface of $M$, and $d$ is the distance function to $\Sigma$, then the first free variation of $\Sigma$ by $\nabla d$ is
\begin{equation}
    \frac{d}{dt}\area(d^{-1}(t))=\int_{\Sigma_t} H-\int_{\partial\Sigma_t}\frac{\langle \n,\nu_{\partial M}\rangle}{|\nu_{\partial M}^\top|},
\end{equation}
and the second free variation is
\begin{equation}
    \frac{d^2}{dt^2}\area(d^{-1}(t))\Big|_{t=0}=\int_{\Sigma}-|A|^2-\ric(\n,\n)-\int_{\partial\Sigma}h^{\partial\Sigma}(\n,\n).
\end{equation}
\end{corollary}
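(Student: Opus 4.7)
The plan is to apply the two free variation formulas of Lemmas \ref{first variation} and \ref{second variation} to the specific vector field $X = \nabla d$, where $d$ is extended smoothly to a small tubular neighborhood of $\Sigma$ inside the ambient closed manifold $\tilde M$. Since $\Sigma$ meets $\partial M$ orthogonally (by the free boundary condition) and is smooth, the normal exponential map produces a neighborhood on which $d$ is $C^2$ and $|\nabla d| \equiv 1$, and the flow $F_t$ of $\nabla d$ satisfies $F_t(\Sigma) = d^{-1}(t)$; thus this genuinely realizes $\{d^{-1}(t)\}$ as a free variation in the sense of \S\ref{two variations}.

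For the first formula, I would observe that on $\Sigma_t = d^{-1}(t) \cap M$, the vector $\nabla d$ is a unit vector normal to the level set, hence coincides with the unit normal $\n$. This gives $\langle X, \n \rangle = 1$ along $\Sigma_t$, and $\langle X, \nu\rangle = 0$ since the conormal $\nu$ is tangent to $\Sigma_t$ while $X$ is normal to it. Substituting these values into the conclusion of Lemma \ref{first variation} immediately yields the stated expression for $\tfrac{d}{dt}\area(d^{-1}(t))$.

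For the second formula, at $t=0$ we have $X|_\Sigma = \n$, so in the notation $X|_\Sigma = f\n$ of Lemma \ref{second variation} we have $f \equiv 1$, which makes $|\nabla f|^2 \equiv 0$ and $f^2 \equiv 1$. Plugging in gives the stated expression for $\tfrac{d^2}{dt^2}\area(d^{-1}(t))|_{t=0}$. The only genuine concern is the regularity of $d$ near $\partial\Sigma$, which is where the orthogonal meeting of $\Sigma$ with $\partial M$ is essential; once that is in hand, the corollary is a direct specialization of the two preceding lemmas and no further estimate is needed.
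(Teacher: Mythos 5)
Your proposal is correct and matches the paper's intent: the corollary is stated as an immediate specialization of Lemmas \ref{first variation} and \ref{second variation} to $X=\nabla d$ (so $\langle X,\n\rangle=1$, $\langle X,\nu\rangle=0$, i.e.\ $f\equiv 1$), and the paper gives no further argument. Your added remark on the smoothness of $d$ near $\partial\Sigma$ via the orthogonal (free boundary) intersection is the same point the paper defers to Lemma \ref{dist fctn}, so nothing is missing.
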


\subsection{Good Neighborhoods}
Now we embed $(M,\partial M,g)$ into some closed manifold $(\tilde M,\tilde g)$ with same dimension. Let $\tilde\Sigma$ be the extended closed hypersurface in $\tilde M$. Here we show that there exists a good neighborhood foliated by hypersurfaces with non-negative mean curvature. If $\Sigma$ is orientable, we first claim that
\begin{claim}\label{seperate}
$\Sigma$ separates $M$ into two pieces.
\end{claim}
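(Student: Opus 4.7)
The strategy is a proof by contradiction via a double cover argument. Suppose for contradiction that $M\setminus\Sigma$ is connected. Since $\Sigma$ is orientable it is two-sided in $M$, so it has a tubular neighborhood of the form $\Sigma\times(-\epsilon,\epsilon)$, and mod-$2$ algebraic intersection with $\Sigma$ defines a homomorphism $\pi_1(M)\to\mathbb{Z}/2\mathbb{Z}$. The hypothesis that $\Sigma$ does not separate $M$ forces some loop in $M$ to cross $\Sigma$ an odd number of times, so this homomorphism is nontrivial.

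Let $\pi\colon\hat M\to M$ be the connected double cover associated to this homomorphism. Concretely, $\hat M$ is built by taking two copies of $M$ cut open along $\Sigma$ and gluing the positive side of one to the negative side of the other, and vice versa. By construction the preimage $\pi^{-1}(\Sigma)$ is a disjoint union $\Sigma_1\cup\Sigma_2$ of two copies of $\Sigma$, each mapped isometrically onto $\Sigma$. With the pullback metric $\hat g=\pi^{*}g$, the manifold $(\hat M,\partial\hat M,\hat g)$ still has non-negative Ricci curvature and convex boundary (both being local properties preserved by the local isometry $\pi$), and $\Sigma_1,\Sigma_2$ remain embedded free boundary minimal hypersurfaces.

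Finally, I would invoke the free boundary Frankel property for $(\hat M,\partial\hat M,\hat g)$ listed among the key tools in Remark~\ref{remark1}: any two immersed free boundary minimal hypersurfaces in a manifold with non-negative Ricci curvature and convex boundary must intersect. Applied to $\Sigma_1,\Sigma_2$ this yields $\Sigma_1\cap\Sigma_2\neq\emptyset$, contradicting their disjointness by construction, so $\Sigma$ must separate $M$.

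The main obstacle is really the free boundary Frankel property itself, since in our weaker setting of only non-negative Ricci with possibly non-strict convexity one must either produce an intersection or split off a flat totally geodesic product region via the rigidity case of the second variation of a minimizing geodesic between the two hypersurfaces (with endpoint terms controlled by convexity of $\partial M$). This however is supplied by the paper's standing framework, so it may be quoted directly in the proof of the claim.
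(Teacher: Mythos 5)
Your proof is correct, but it takes a genuinely different route from the paper's. The paper argues homologically: if $\Sigma$ were non-separating it would represent a nontrivial class in $H_n(M,\partial M;\mathbb{Z})$, and minimizing area in that class (regularity being available since $2\le n\le 6$) would produce a two-sided stable free boundary minimal hypersurface, contradicting the non-existence of such hypersurfaces under $\ric\ge 0$ and convex boundary. You instead pass to the connected double cover determined by the mod-$2$ intersection homomorphism, lift $\Sigma$ to two disjoint, connected, properly embedded free boundary minimal hypersurfaces for the pulled-back metric, and contradict the Frankel property; that property is exactly Lemma \ref{intersection}, an external input from Fraser--Li which the paper quotes anyway, and it is logically independent of the claim, so there is no circularity even though the claim appears earlier in the text. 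Your route is softer---no area minimization, no regularity theory or dimension restriction---and it anticipates the same cut-and-cross-glue double cover the paper uses later for the non-orientable case; the paper's route stays within machinery it needs elsewhere (stability inequality plus minimizing currents) and avoids invoking Frankel at this point. Two housekeeping items that both arguments leave implicit: one should take $\Sigma$ connected (otherwise apply the argument, or Frankel directly in $M$, to two components), and it is the combination of disconnectedness of $M\setminus\Sigma$ with connectedness and two-sidedness of $\Sigma$ that upgrades ``separates'' to ``exactly two pieces.''
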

\begin{proof}
If not, $\Sigma$ would be an non-trivial element in $H_n(M,\partial M,\mathbb Z)$. Now taking the area minimizing hypersurface in this homology class, we can obtain a stable minimal hypersurfaces with free boundary, which contradicts with the assumptions.
\end{proof}

\begin{lemma}\label{dist fctn}
Let $(\Sigma,\partial\Sigma)\subseteq(M,\partial M)$ be a free boundary minimal hypersurface, there exists a relative open set $U\supseteq \Sigma$ and $a>0$ such that
\begin{enumerate}
  \item $U$ is homomorphic to $\Sigma\times (-2a,2a)$;
  \item The two distance functions $d=\dist(\cdot,\Sigma)$ on $M$ and $\tilde d=\dist(\cdot, \tilde\Sigma)$ on $\tilde M$ are smooth on $U$. Moreover, $d|_U=\tilde d|_U$;
  \item The level set $d^{-1}(t)$ is a smooth hypersurface with boundary for $t\in(-2a,2a)$. Moreover, the level set with normal vector field $-\nabla d$ has non-negative mean curvature if $t\in(0,2a)$;
  \item $\area(d^{-1}(t_2))\leq \area(d^{-1}(t_1))$ if $0\leq t_1\leq t_2\leq 2a$.
\end{enumerate}
\end{lemma}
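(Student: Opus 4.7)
The plan is to build $U$ as a tubular neighborhood of $\Sigma$ inside a closed ambient $\tilde M$, identify it with the level sets of a signed distance function, and then read off (3) and (4) from the Riccati equation together with Corollary \ref{variation}. Orientability of $\Sigma$ and Claim \ref{seperate} let me treat $d$ as a \emph{signed} distance on $M$.

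\emph{Extension and level set structure (parts (1) and (2)).} First I embed $(M,\partial M,g)$ isometrically into a closed $(\tilde M,\tilde g)$ of the same dimension and extend $\Sigma$ past $\partial M$ to a smooth closed hypersurface $\tilde\Sigma\subset\tilde M$. The free boundary condition (orthogonality of $\Sigma$ and $\partial M$ along $\partial\Sigma$) makes this possible, essentially via reflection in Fermi coordinates at $\partial M$ followed by a standard smoothing. The normal exponential map of $\tilde\Sigma$ is then a diffeomorphism from $\tilde\Sigma\times(-2a,2a)$ onto a tubular neighborhood $\tilde U\subset\tilde M$ for small $a>0$, and $\tilde d$ is smooth on $\tilde U$. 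Setting $U=\tilde U\cap M$ gives the product decomposition in (1). The inequality $d\geq\tilde d$ is immediate; for the reverse, shrinking $a$ if needed, the unique $\tilde\Sigma$-foot of each $p\in U$ stays in $\Sigma$ because the piece $\tilde\Sigma\setminus\Sigma$ sits on the opposite side of $\partial M$ and the short minimizing normal geodesic from $p$ to its foot cannot cross $\partial M$ (convexity of $\partial M$ keeps geodesics inside $M$ over short distances). This proves (2), and since $\tilde d$ is smooth, each $d^{-1}(t)$ is a smooth hypersurface with boundary in $\partial M$.

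\emph{Mean curvature of the level sets (part (3)).} Along the geodesic flow of $\nabla d$, the shape operator $A_t$ of $\Sigma_t=d^{-1}(t)$ with respect to $\nabla d$ satisfies the Riccati equation $A_t'=-A_t^{2}-R(\cdot,\nabla d)\nabla d$. Taking traces, with $H_t=\mathrm{tr}(A_t)$,
\begin{equation*}
    \frac{dH_t}{dt}=-|A_t|^{2}-\ric(\nabla d,\nabla d).
\end{equation*}
Minimality of $\Sigma$ gives $H_0=0$, and $\ric\geq 0$ forces $H_t\leq 0$ for $t\in[0,2a)$. Equivalently $\Sigma_t$ has non-negative mean curvature with respect to $-\nabla d$, which is (3); the case $t\in(-2a,0)$ is symmetric.

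\emph{Area monotonicity (part (4)) and main obstacle.} Applying Corollary \ref{variation} to the flow by $\nabla d$,
\begin{equation*}
    \frac{d}{dt}\area(\Sigma_t)=\int_{\Sigma_t}H_t-\int_{\partial\Sigma_t}\frac{\langle\nabla d,\nu_{\partial M}\rangle}{|\nu_{\partial M}^\top|}.
\end{equation*}
The interior term is $\leq 0$ by the previous step. For the boundary term, at $t=0$ the factor $\langle\nabla d,\nu_{\partial M}\rangle$ vanishes on $\partial\Sigma$ (free boundary), and its $t$-derivative there equals $h^{\partial M}(\n,\n)\geq 0$ by convexity of $\partial M$; a Riccati-type analysis along $\partial M$ then keeps $\langle\nabla d,\nu_{\partial M}\rangle\geq 0$ throughout $[0,2a)$ after possibly shrinking $a$. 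Hence $\frac{d}{dt}\area(\Sigma_t)\leq 0$ on $(0,2a)$, giving (4). The main obstacle is controlling this boundary term: since $d$ is not intrinsically adapted to $\partial M$ away from $\partial\Sigma$, one must use convexity carefully to show the inner product $\langle\nabla d,\nu_{\partial M}\rangle$ stays non-negative as $t$ grows, and in parallel ensure that the reflection-extended $\tilde\Sigma$ is genuinely smooth across $\partial M$.
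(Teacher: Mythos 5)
Your proposal follows essentially the same route as the paper's proof: use Claim \ref{seperate} to treat $d$ as a signed distance in a tubular neighborhood where $d=\tilde d$, get the mean-curvature sign of the level sets from the (traced) Riccati/Laplacian comparison using minimality of $\Sigma$ and $\ric\geq 0$, and deduce the area monotonicity from Corollary \ref{variation} with the interior term nonpositive and the boundary term controlled by convexity via $\langle\nabla d,\nu_{\partial M}\rangle\geq 0$ near $\Sigma$. If anything, you give more detail than the paper (which simply asserts the boundary-sign inequality from convexity and the existence of the good neighborhood) on the extension $\tilde\Sigma$ and on why $\langle\nabla d,\nu_{\partial M}\rangle$ stays non-negative.
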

\begin{proof}
By Claim \ref{seperate}, $\Sigma$ separates $M$ into two pieces: $M^+$ has inward normal vector field $\n$ on $\Sigma$ and $M^-$ has outward normal vector field $\n$ on $\Sigma$.  We can define the distance function
\begin{equation}
d(x)=\dist(x,\Sigma) \text{ if } x\in M^+,
\end{equation}
and
\begin{equation}
d(x)=-\dist(x,\Sigma) \text{ if } x\in M^-.
\end{equation}
Take $U=d^{-1}(-2a,2a)$ and one can always shrink $a$ such that $(1)(2)$ and the first half of $(3)$ satisfied. Since $\Delta r\leq 0$, and the mean curvature of $d^{-1}(t)$ satisfies
\begin{equation}
H=\d(-\nabla d)=-\Delta d\geq 0.
\end{equation}
For (4), recall the first variation formula in Corollary \ref{variation},
\begin{equation}
    \frac{d}{dt}\area(d^{-1}(t))=-\int_{\Sigma_t} H-\int_{\partial\Sigma_t}\frac{\langle \nabla d,\nu_{\partial M}\rangle}{|\nu_{\partial M}^\top|},
\end{equation}
Since $\partial M$ is convex, we can shrink $a$ such that
\begin{equation}
    \langle \nabla d,\nu_{\partial M}\rangle\geq 0,
\end{equation}
then
\begin{equation}
\frac{d}{dt}\area(d^{-1}(t))\leq 0 \mathrm{\ for\ } t\in [0,2a].
\end{equation}
\end{proof}
\begin{rmk}
Under our assumptions, one can obtain $(4)$ directly from the second variation formula. However, our arguments here work for $A^{\partial M}\geq 0$, in which case that the second variation doesn't work.
\end{rmk}

\section{Construction of continuous sweepouts}\label{continuous construction}
In this section, we construct the sweepouts from all free boundary minimal hypersurfaces. First we introduce the continuous min-max theory for compact manifolds with boundary, which is developed by De Lellis-Ramic \cite[\S 1]{DR}. Then we show the foliation of a good neighborhood in Lemma \ref{dist fctn} could be extended to a one-parameter family of hypersurfaces for each orientable minimal hypersurface. In the last part, we construct the sweepout for non-orientable case by considering the orientable 2-sheeted covering.

\subsection{Continuous Sweepouts}
We first introduce some definitions by De Lellis-Ramic \cite[\S 1]{DR}.
\begin{definition}\label{def of sweepout}
A family of $\mathcal H^n$-measurable closed subsets $\{\Gamma_t\}_{t\in[0,1]}$ of $M$ with finite $\mathcal H^n$-measure is called \emph{a generalized smooth family of hypersurfaces with boundary} if
\begin{description}
  \item[$\mathbf{s1}$] for each $t$, there is a finite subset $P_t\subseteq M$ such that $\Gamma_t$ is a smooth hypersurface in $M\setminus P_t$ with boundary in $\partial M\setminus P_t$;
  \item[$\mathbf{s2}$] $t\mapsto\mathcal H^n(\Gamma_t)$ is continuous and $t\mapsto\Gamma_t$ is continuous in the Hausdorff topology;
  \item[$\mathbf{s3}$] $\Gamma_t\rightarrow\Gamma_{t_0}$ smoothly in any compact $U\subset\subset M\setminus P_{t_0}$ as $t\rightarrow t_0$.
\end{description}
A generalized smooth family $\{\Sigma_t\}_{t\in[0,1]}$ is called a \emph{sweepout of $(M,\partial M)$ with portion $T$ (possibly empty)} if there exists a family of relative open sets $\{\Omega_t\}_{t\in[0,1]}$ such that
\begin{description}
  \item[$\mathbf{sw1}$] $(\Sigma_t\setminus\partial\Omega_t)\subseteq P_t$ for any $t\in [0,1]$;
  \item[$\mathbf{sw2}$] Volume($\Omega_t\setminus\Omega_s$)+Volume$(\Omega_s\setminus\Omega_t)\rightarrow 0$ as $s\rightarrow t$;
  \item[$\mathbf{sw3}$] $\Omega_1=M, \Sigma_0=T, \Sigma_t\cap T=\emptyset$ for $t>0$, and  $\{\Sigma_t\}_{t\in[0,\epsilon]}$ is a smooth foliation of a neighborhood of $T$ for some small $\epsilon>0$, i.e. there exists a non-negative Morse function $r$ with
      \begin{equation}
      \Sigma_t= r^{-1}(t) \text{ for } t\in[0,\epsilon],
      \end{equation}
\end{description}
\end{definition}

Let $\{\Gamma_t\}$ be a generalized smooth family of hypersurfaces with boundary, we denote
\begin{equation}
\mathbf L(\{\Gamma_t\})=\sup_t\mathcal H^n(\Gamma_t).
\end{equation}

Let $\{\Gamma_t\}$ and $\{\Gamma_t'\}$ be two sweepouts of $(M,\partial M,g)$, we will say \emph{$\{\Gamma_t\}$ is homotopic to $\{\Gamma'_t\}$} if there exists a 2-parameter family $\{\Psi_{s,t}\}$ such that
\begin{enumerate}
  \item $\{\Psi_{s,\cdot}\}$ is a sweepout for any $s$;
  \item $\{\Psi_{0,t}\}=\{\Gamma_t\}$ and $\{\Psi_{1,t}\}= \{\Gamma'_t\}$;
  \item $\Psi_{s,t}=\Psi_{0,t}$ if $t\in\{0,1\}$.
\end{enumerate}

For a cut-off function $\phi:[0,\infty)\rightarrow [0,1]$ satisfying $\phi(s)=0$ when $s\in[0,\epsilon]\cup[2a-\epsilon,2a]$, there is a family of isotopies of $\tilde M$ generated by the vector field $\phi(\tilde d(x))\nabla\tilde d$. We will use the following proposition in the rest of the section.
\begin{proposition}
Let $(F_t)_{t\in[0,1]}$ be the isotopy of $\tilde M$ generated by $\phi\nabla\tilde d$. Then
\begin{enumerate}
  \item $F_t(x)\cap M'=F_t(x)\cap U$, for any $x\in U$;
  \item for any sweepout $\{\Gamma_t\}_{t\in[0,1]}$ of $(M^+,\partial M^+,\Sigma)$, $\{F_t(\Gamma_t)\cap M^+\}_{t\in[0,1]}$ is also a sweepout, which is homotopic to $\{\Gamma_t\}$.
\end{enumerate}
\end{proposition}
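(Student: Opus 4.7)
The approach will use two structural properties of $V=\phi(\tilde d)\nabla\tilde d$: its support lies inside the strip $\{\epsilon\leq\tilde d\leq 2a-\epsilon\}$, which by Lemma \ref{dist fctn}(2) is contained in $U$ and in which $\tilde d$ coincides with the signed distance $d$; and the free boundary condition $\Sigma\perp\partial M$ forces $\nabla\tilde d$ to be tangent to $\partial M$ at every point of $\partial M\cap U$. These two facts, once in hand, make both items essentially mechanical.

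For item (1), the non-decreasing behaviour of $\tilde d\circ F_t$ (since $V(\tilde d)=\phi(\tilde d)\geq 0$) together with the vanishing of $\phi$ at the two endpoints $\epsilon$ and $2a-\epsilon$ shows that trajectories starting in $U$ remain in the closed strip $\{\tilde d\leq 2a-\epsilon\}$, which together with the identity region of the flow is contained in $U$; hence $F_t(U)\subset U$. Combined with tangency of $V$ to $\partial M$, this in turn forces the flow to preserve $M\cap U$, which is the intended content of (1). The tangency step is the main analytic input, and I would verify it as follows: for $p\in\partial M\cap U$ the equality $d(p)=\tilde d(p)$ produces a minimizing geodesic of $\tilde M$ from $p$ to $\tilde\Sigma$ that lies in $M$ and terminates on $\Sigma$; since $\Sigma\perp\partial M$ and the distance function is smooth on $U$, the level sets $\{\tilde d=t\}$ inherit orthogonality to $\partial M$, so $\nabla\tilde d\in T\partial M$ at $p$.

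For item (2), I would set $\Gamma_t':=F_t(\Gamma_t)\cap M^+$ and $\Omega_t':=F_t(\Omega_t)\cap M^+$ and verify the sweepout axioms directly. Smoothness of the family $F_t$ combined with item (1) transfers (\textbf{s1})-(\textbf{s3}) and (\textbf{sw1})-(\textbf{sw2}) from $\{\Gamma_t\}$ to $\{\Gamma_t'\}$; the cut with $M^+$ creates no new singularities because $F_t$ preserves $\partial M^+$. Axiom (\textbf{sw3}) is automatic: since $\phi\equiv 0$ on $[0,\epsilon]$, the flow is the identity in a neighborhood of $\Sigma$, so $\Gamma_0'=\Sigma$ and the prescribed smooth foliation near the portion survives verbatim. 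For the homotopy, $\Psi_{s,t}:=F_{st}(\Gamma_t)\cap M^+$ interpolates the two sweepouts, each slice being a sweepout by the same verification; a small cut-off reparametrization of $s$ near $t=1$ may be needed to match the endpoint data exactly. The main obstacle is the tangency verification, which rests on the careful selection of $U$ in Lemma \ref{dist fctn}; once that is secured the rest is bookkeeping.
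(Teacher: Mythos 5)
The main gap is your tangency claim: it is not true that $\nabla\tilde d$ is tangent to $\partial M$ along $\partial M\cap U$. The free boundary condition gives orthogonality of $\Sigma$ and $\partial M$ only along $\partial\Sigma$, and this does not propagate to the level sets $d^{-1}(t)$ for $t\neq 0$ unless $\partial M$ is totally geodesic. The model example of \S\ref{Preliminary} already refutes it: for the equatorial disk $\Sigma$ in a round ball $M\subset\R^3$, $\tilde d$ is the linear height function, so $\nabla\tilde d$ is a constant vector field, transverse to the sphere $\partial M$ at every boundary point off the equator. Consistently, Lemma \ref{dist fctn} only secures $\langle\nabla d,\nu_{\partial M}\rangle\geq 0$ (from convexity), not $=0$, and the whole point of the free variation formalism of \S\ref{two variations} (the extra boundary term in Lemma \ref{first variation}) is that the deforming field is \emph{not} tangent to $\partial M$, so the flow $F_t$ does not preserve $M$, $M'$ or $U$. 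Hence your intermediate assertions ``$F_t(U)\subset U$'' and ``the flow preserves $M\cap U$'' are unjustified and false in general; points of $U$ can and do get pushed across $\partial M$ into $\tilde M\setminus M$.

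Fortunately, item (1) asserts much less than invariance: it only says that the part of the image lying in $M'$ lies in $U$, and this follows from the paper's two-line argument, which needs no statement about $\partial M$ at all. Since $\phi(\tilde d)$ vanishes wherever $\tilde d\notin(\epsilon,2a-\epsilon)$, every point of $M'\setminus U$ is a fixed point of the flow; so if $x\in U$ had $F_t(x)\in M'\setminus U$, then $x=F_{-t}(F_t(x))=F_t(x)\notin U$, a contradiction. For item (2) your scheme (set $\Omega'_t=F_t(\Omega_t)\cap M^+$, check $(\mathbf{s1})$--$(\mathbf{s3})$, $(\mathbf{sw1})$--$(\mathbf{sw3})$, and interpolate with $\Psi_{s,t}=F_{st}(\Gamma_t)\cap M^+$) is the same routine verification the paper omits, and $(\mathbf{sw3})$ is indeed automatic because $\phi\equiv 0$ near $\Sigma$; but the verification must be phrased, as in the free variation setup, by cutting with $M^+$ \emph{after} flowing, not by appealing to invariance of $M^+$ under $F_t$, which fails for the reason above.
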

\begin{proof}
Since $F_{-t}(F_t(x))=x$, and $F_t=id$ on $M'\setminus U$, we know that $F_t(x)\in U$. This proved the first one. For the second claim, one can check all the things by Definition \ref{def of sweepout} directly, and we omit it here.
\end{proof}

For a homotopically family $\Lambda$ of sweepouts, we define the \emph{width of $M$ associate with} $\Lambda$ as
\begin{equation}
W(M,\partial M,T,\Lambda)=\inf_{\{\Gamma_t\}\in\Lambda}\mathbf L(\{\Gamma_t\}).
\end{equation}
When $T=\emptyset$, we write the width as $W(M,\partial M,\Lambda)$.
\begin{rmk}
Here $W(M,\partial M,\Lambda)$ is different with the notion in \cite{Zho}. Roughly speaking, in that case, $\Gamma_t$ need to be a closed hypersurface with finite singular points for any $t>0$. However, here $\Gamma_t$ is a smooth hypersurface with boundary in $\partial M$.
\end{rmk}

\subsection{Orientable Case}
We first construct the sweepouts in a small neighborhood of $\Sigma$ in $M^+$ and then make sure it can be extended to the whole $M^+$. The construction here are inspired by Zhou \cite[Proposition 3.6, Proposition 3.8]{Zho} .

By Claim \ref{seperate}, we denote the two components of $M\setminus\Sigma$ as $M^+$ and $M^-$. Let $\n$ be the outward normal vector field of $M^-$ on portion $\Sigma$. Set
\begin{align*}
\mathcal S_+:=\{\Sigma^n:&\Sigma^n \text{ is an embedded orientable connected free } \\
                         &\text{ boundary minimal hypersurface in } M\}.
\end{align*}

Our main purpose of this part is following:
\begin{proposition}\label{ori}
For any $\Sigma\in\mathcal S_+$, there exists a sweepout $\{\Sigma_t\}_{t\in[-1,1]}$ of $M$ such that
\begin{itemize}
  \item $\Sigma_0=\Sigma$;
  \item $\mathcal \area(\Sigma_t)\leq \area(\Sigma)$ with equality only if $t=0$;
  \item $\{\Sigma_t\}_{t\in[-\epsilon,\epsilon]}$ forms a smooth foliation of a neighborhood of $\Sigma$.
\end{itemize}
\end{proposition}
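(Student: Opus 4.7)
The plan is to build the sweepout in two stages: first use Lemma~\ref{dist fctn} to produce a good local foliation of a neighborhood of $\Sigma$, then extend this foliation to each side of $\Sigma$ by a min-max/barrier argument and glue the pieces together.

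By Claim~\ref{seperate}, $\Sigma$ separates $M$ into components $M^+$ and $M^-$. Lemma~\ref{dist fctn} supplies a relative neighborhood $U$ of $\Sigma$ foliated by the signed distance levels $\{d^{-1}(t)\}_{t\in[-2a,2a]}$ with $d^{-1}(0)=\Sigma$, each a smooth free-boundary hypersurface, with monotone non-increasing area as $|t|$ grows and non-negative mean curvature of every leaf $d^{-1}(t)$ with $|t|>0$ relative to the normal pointing back to $\Sigma$. This already yields the smooth foliation required by the third bullet on $[-\epsilon,\epsilon]$. Moreover, the strict inequality $\area(d^{-1}(t))<\area(\Sigma)$ for $0<|t|<2a$ must hold: otherwise some leaf would itself be minimal and would give a second free boundary minimal hypersurface disjoint from $\Sigma$, contradicting the Frankel-type property recorded in Remark~\ref{remark1}.

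Next, I extend the foliation through the rest of each $M^\pm$; by symmetry I focus on $M^+$. Fix $0<b<a$ and set $N:=M^+\setminus\{d<b\}$, whose boundary consists of the mean-convex portion $T^+:=d^{-1}(b)$ and the convex portion $\partial M\cap N$. I consider a homotopy class $\Lambda^+$ of sweepouts of $(N,\partial N)$ with portion $T^+$; it is nonempty, since a Morse function on $N$ attaining its minimum on $T^+$ produces a trivial element. Let $W^+$ denote the associated width. The heart of the proof is the claim
\begin{equation*}
  W^+<\area(\Sigma).
\end{equation*}
Granted this claim, I select a sweepout $\{\Gamma_s\}\in\Lambda^+$ with $\mathbf L(\{\Gamma_s\})<\area(\Sigma)$, concatenate it with the local foliation on $\{0\leq d\leq b\}$ and with its $M^-$-counterpart, and reparametrize on $[-1,1]$ to obtain the required $\{\Sigma_t\}$; every slice with $t\neq 0$ then has area strictly below $\area(\Sigma)$.

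I would prove the strict inequality by contradiction, in the spirit of Zhou~\cite[Prop.~3.6,~3.8]{Zho}. Suppose $W^+\geq\area(\Sigma)$. Applying the continuous min-max theorem of De Lellis--Ramic~\cite{DR} to $N$, with $T^+$ serving as a mean-convex barrier and $\partial M\cap N$ as the convex free boundary portion, produces an embedded minimal hypersurface $\Sigma'\subset N$ with free boundary on $\partial M$ and $\area(\Sigma')=W^+\geq\area(\Sigma)$. The strong maximum principle, combined with Lemma~\ref{dist fctn}(3) and the non-minimality of every distance level ruled out in the second paragraph, prevents $\Sigma'$ from touching $T^+$. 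Hence $\Sigma'$ is a genuine free boundary minimal hypersurface of $(M,\partial M)$ strictly separated from $\Sigma$, contradicting Frankel's property. The main obstacle I anticipate is precisely this last step: handling the corner $\partial T^+\cap\partial M$ where the portion meets the free boundary, and verifying that De Lellis--Ramic's regularity combined with the barrier principle really deliver a genuine free boundary minimal hypersurface of $M$ disjoint from $\Sigma$; once this is in place, the Frankel contradiction closes the argument and the gluing step assembles the required sweepout.
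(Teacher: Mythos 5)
Your overall strategy is the same as the paper's (local foliation from Lemma~\ref{dist fctn}, extension over the truncated region $M^+_\beta$ via min-max, Frankel contradiction through Lemma~\ref{intersection}, then gluing), but the step you yourself flag as the main obstacle is exactly where your argument has a genuine gap, and the mechanism you propose there does not close it. You want the min-max limit $\Sigma'$ in $N=M^+\setminus\{d<b\}$ to be a genuine free boundary minimal hypersurface of $(M,\partial M)$, disjoint from the interface $T^+=d^{-1}(b)$, and you invoke the strong maximum principle with $T^+$ as a weakly mean-convex barrier. This cannot work as stated: when the De Lellis--Ramic machinery is run on $N$ treating $T^+$ as part of the boundary, the limit may a priori have \emph{free boundary on} $T^+$, i.e.\ meet it orthogonally; an orthogonal intersection is transversal, not tangential, so no maximum principle excludes it. Even for tangential touching, the leaves are only weakly mean-convex ($H\geq 0$, possibly vanishing somewhere), so the strong maximum principle at best forces local coincidence with $T^+$, which then requires a separate argument that $T^+$ cannot be minimal near the touching set, and it says nothing at the corner $\partial T^+\subset\partial M$. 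In addition, part of the mass of the min-max limit could simply concentrate on or near the portion, in which case almost-minimizing annuli centered near $T^+$ and the regularity theory are not available in the form you need.

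The paper closes this gap not by a barrier principle but by a deformation of the sweepouts: Lemma~\ref{push forward} uses the flow of $\phi\nabla\tilde d$ with a cutoff satisfying $\phi'+c\phi\leq 0$, and the first free variation formula (Corollary~\ref{variation}) to show this flow does not increase area, so every slice of near-maximal area in a minimizing sequence can be pushed a definite distance ($a/2$) into $N$ away from $T^+$; the tightening is then performed keeping such slices fixed near $T^+$, which guarantees the almost-minimizing min-max sequence, and hence the limit $\Gamma$, stays at positive distance from the interface, so $\partial\Gamma\subset\partial M$ and Frankel applies. This push-off (which is precisely why the foliation is built so that area is non-increasing in the direction \emph{into} $N$) is the missing ingredient in your proposal; without it, or a free-boundary strong maximum principle strong enough to handle orthogonal contact, weak mean convexity, and the corner, the claim $W^+<\area(\Sigma)$ is not established. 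Two minor points: the Frankel-type property you need is Lemma~\ref{intersection} (Fraser--Li), not Remark~\ref{remark1}; and your strictness argument for $\area(d^{-1}(t))<\area(\Sigma)$ should be run through the first variation formula (equality plus monotonicity forces both the mean curvature term and the boundary term to vanish, so the leaf would be a free boundary minimal hypersurface disjoint from $\Sigma$), or more directly through the strictly negative second variation at $t=0$ coming from the convexity of $\partial M$ and $\partial\Sigma\neq\emptyset$.
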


Take $a$ as in Lemma \ref{dist fctn}, then
\begin{equation}
\mathcal \area(d^{-1}(t))<\mathcal \area(\Sigma), \mathrm{\ for\ } t\in(0,2a].
\end{equation}
For any $t\in [0,2a]$, set
\begin{equation}
M^+_{t}:=M^+\setminus\cup_{s\in[0,t]}d^{-1}(s).
\end{equation}
Now we need to extend the local foliation to $M^+_{2a}$. Supposing that for any extended sweepouts $\Lambda$, it always satisfies $\mathbf L(\Lambda)\ge \area(\Sigma)$, then by the following theorem, we can obtain another free boundary minimal hypersurface, which contradicts with Proposition \ref{intersection}.
\begin{theorem}\label{part sweepout}
Let $M'$ and $\Sigma'$ be some $M^+_{\beta}(0<\beta\ll a)$ as well as $\Sigma_s$ above. For any homologically closed family $\Lambda$ of sweepouts of $M'$, with $W(M',\partial M',\Sigma',\Lambda)>\area(\Sigma')$, there exists a min-max sequence $\{\Sigma_{t_n}^n\}$ of $\Lambda$ that converge in the varifold sense to an embedded free boundary minimal hypersurface $\Gamma$ (possibly disconnected), which satisfied $\partial\Gamma\cap\Sigma'=\emptyset$. Furthermore, the width
\begin{equation}
W(M',\partial M',\Sigma',\Lambda)=\area(\Gamma),
\end{equation}
if counted with multiplicities.
\end{theorem}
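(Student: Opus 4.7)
The plan is to invoke the continuous min-max machinery of De Lellis--Ramic \cite{DR} applied to $(M',\partial M')$, whose boundary decomposes into the ``natural'' piece $\partial M \cap \overline{M'}$ and the ``portion'' piece $\Sigma' = d^{-1}(\beta)$. The strategy is: run pull-tight on a minimizing sequence of sweepouts within $\Lambda$, extract a min-max sequence converging as varifolds to a stationary integral varifold $V$, apply the free-boundary regularity theorem of De Lellis--Ramic to conclude that $\Gamma = \mathrm{spt}(V)$ is a smooth embedded free boundary minimal hypersurface with $\area(\Gamma)$ (counted with multiplicity) equal to the width, and finally use $\Sigma'$ as a strict barrier to force $\partial\Gamma\cap\Sigma' = \emptyset$.

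First I would check that $(M',\partial M')$ fits the framework. The piece $\partial M\cap \overline{M'}$ lies inside the convex boundary $\partial M$, hence is convex. By Lemma \ref{dist fctn}(3), $\Sigma' = d^{-1}(\beta)$ is smooth and has non-negative mean curvature with respect to the normal $-\nabla d$, which points into the complement of $M'$; equivalently, $\Sigma'$ is (weakly) mean-convex from the $M'$ side. The strict inequality $W > \area(\Sigma')$ is needed in two ways: it rules out that a min-max sequence $\{\Sigma^n_{t_n}\}$ has $t_n \to 0$ (since near $t=0$ the sweepouts are a smooth foliation of a neighborhood of $\Sigma'$ whose areas tend to $\area(\Sigma')$), and hence it ensures that the limiting varifold $V$ is a non-trivial stationary integral varifold, and not simply $\Sigma'$ with some multiplicity. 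Once this is secured, regularity of $V$ on the convex and mean-convex portions of $\partial M'$ follows directly from \cite{DR}, and the identification $W = \area(\Gamma)$ is the standard consequence of the convergence.

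The main obstacle is to show $\partial\Gamma \cap \Sigma' = \emptyset$; this is where the mean-convex barrier role of $\Sigma'$ is essential. At a hypothetical interior point of $\Gamma$ that touches $\Sigma'$, the classical strong maximum principle for minimal varifolds applied against the mean-convex $\Sigma'$ would force $\Gamma$ to agree with a component of $\Sigma'$; but $\Sigma'$ is not itself minimal (it has strictly positive mean curvature for small $\beta>0$, as can be read off from Corollary \ref{variation} together with the strict inequality coming from $\area(d^{-1}(t)) < \area(\Sigma)$ for $t\in(0,2a]$), and this case is excluded. At a hypothetical point where $\partial\Gamma$ meets $\Sigma'$, one applies the boundary version of the maximum principle for free boundary minimal hypersurfaces against the mean-convex barrier $\Sigma'$ to derive the analogous contradiction. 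Combining the two cases, $\Gamma$ lies strictly away from $\Sigma'$, so $\partial\Gamma\subseteq \partial M \cap \overline{M'}$, which completes the proof.
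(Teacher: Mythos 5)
Your high-level plan (tighten, extract an almost minimizing min-max sequence, apply De Lellis--Ramic regularity, and use $\Sigma'$ as a barrier) matches the paper's framework, but the step that carries the actual content of the theorem is missing, and the substitute you propose does not work. The paper's proof hinges on a quantitative deformation result (Lemma \ref{push forward}): using the flow of $\phi\nabla\tilde d$ and the first free variation formula together with $H\geq 0$ for the level sets of $d$ and the convexity of $\partial M$, one replaces any sweepout by a homotopic one, without increasing area, so that every slice of near-maximal area lies at distance at least $a/2$ from $\Sigma'$ (property (\ref{pull})). It is this uniform distance --- preserved through the tightening and almost-minimizing steps --- that forces the limit varifold to satisfy $\Gamma\cap\Sigma'=\emptyset$ and lets one run the DR regularity only along $\partial M$. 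Your argument that $W>\area(\Sigma')$ rules out $t_n\to 0$ does not give this: slices at times bounded away from $0$ can still accumulate area on or near $\Sigma'$ (and near the corner $\partial\Sigma'\subset\partial M$), so nothing in your setup prevents the min-max limit from touching $\Sigma'$ or from having part of its boundary on $\Sigma'$.

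The maximum principle you invoke to repair this fails precisely at the boundary case, which is the dangerous one. Mean convexity of $\Sigma'$ is no obstruction to a minimal hypersurface having \emph{free boundary} on $\Sigma'$, meeting it orthogonally --- think of free boundary minimal disks in a strictly convex ball; there is no ``boundary maximum principle against a mean-convex barrier'' that excludes this configuration, so if the min-max is run with $\Sigma'$ treated as part of the constraint boundary, the limit could legitimately have free boundary on $\Sigma'$ and your conclusion $\partial\Gamma\cap\Sigma'=\emptyset$ would not follow. In addition, your interior-touching argument presumes strictly positive mean curvature of $\Sigma'=d^{-1}(\beta)$, but Lemma \ref{dist fctn} and Corollary \ref{variation} only give $H=-\Delta d\geq 0$; the strict area decrease $\area(d^{-1}(t))<\area(\Sigma)$ comes from the boundary term $\langle\nabla d,\nu_{\partial M}\rangle\geq 0$ in the first variation, not from pointwise strict mean convexity, so with a merely weakly mean-convex barrier the strong maximum principle only yields local coincidence and requires a further argument. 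The correct route, as in the paper, is to avoid contact altogether by the pulling-away deformation before taking limits, rather than to try to exclude it a posteriori.
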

\begin{proof}
It suffices to show that there exists a minimizing sequence
\begin{equation}
\{\{\Sigma_t^k\}_{t\in[0,1]}\}_{k=1}^\infty\subseteq\Lambda,
\end{equation}
such that
\begin{equation}\label{pull}
\mathcal H^n(\Sigma_t^k)\geq W(M',\partial M',\Sigma',\Lambda)-\delta\Rightarrow d(\Sigma_t^k,\Sigma')\geq\frac{a}{2},
\end{equation}
where $\delta=\frac{1}{4}(W(M',\partial M',\Sigma',\Lambda)-\area(\Sigma'))>0$, and $\mathrm d(\cdot,\cdot)$ is the distance function of $(M,\partial M,g)$.

\begin{lemma}\label{push forward}
For any $\{\Gamma_t\}\in\Lambda$ and $t_0>0$, there exists another $\{\Gamma'_t\}\in\Lambda$ and $\varepsilon\in(0,a)$ satisfying
\begin{enumerate}
  \item $\Gamma_t=\Gamma'_t$ for all $t\in[0,\varepsilon]$;
  \item $\area(\Gamma'_t)\leq\area(\Gamma_t)$ for all $t\in[0,1]$;
  \item\label{away} $\Gamma'_t\subseteq M^+_{\beta+\frac{a}{2}}$ for all $t>t_0$.
\end{enumerate}
\end{lemma}
\begin{proof}
Let $c=\sup_{x\in U}|A(x)|$ (where $A$ is the second fundamental form of the level set of $d$), and $\phi$ be some cut-off function satisfying
\begin{itemize}
  \item $\phi'+c\phi\leq 0$;
  \item $\phi(r)=0$, for all $r>2a$;
\end{itemize}
Denote by $(G_t)_{0\leq t\leq1}$ the one-parameter family of homomorphisms generated by $\phi\nabla\tilde d$. Given surface $L\subseteq M^+_{\beta+2\eta}$ (where $\eta\leq \frac{a}{8}$ and will be identified later) such that $\partial L\subset \partial M\cap M^+_{\beta+2\eta}$, for any $x\in G_t(L)\cap M'$, choose orthonormal basis $\{e_i\}_{i=1}^{n}$ of $T_xG_t(L)$ and satisfies $e_i\perp \nabla d$ for $1\leq i\leq n-1$. Moreover, let $\bar\n$ be the unit normal outward vector field and $e^*$ be the unit vector of the projection of the $\bar \n$ on $T_x(d^{-1}(r(x)))$. By definition,
\begin{align*}
\d_{G_t(L)}(\phi\nabla d)&=\phi'\langle\nabla d,e_n\rangle^2+\phi\d_{G_t(L)}\nabla d\\
                                 &=\phi'\langle\nabla d,e_n\rangle^2+\phi(\d_M\nabla d -\langle\nabla_{\bar\n}X,\bar\n\rangle)\\
                                 &=(\phi'-\phi\langle\nabla_{e^*}\nabla d,e^*\rangle)\langle\bar\n,e^*\rangle^2+\phi \d_M\nabla d\\
                                 &\leq (\phi'-\phi\langle\nabla_{e^*}\nabla d,e^*\rangle)\langle\bar\n,e^*\rangle^2\\
                                 &\leq 0.
\end{align*}

Now take $\eta\leq \frac{a}{8}$ small enough such that
$\Gamma_t\subseteq M_{\beta+2\eta}$ for $t\in[\frac{t_0}{2},1]$. As $\{G_s(L)\cap U\}_{s\in[0,t]}$ is the free variation (in the sense of \S \ref{two variations}) of $L$ by the vector field $\phi\nabla d$,  applied Corollary \ref{variation} directly,
\begin{align*}
&\ \ \ \ \frac{d}{ds}\area(G_s(L)\cap M')\\
&=\int_{G_s(L)\cap U}\d_{G_s(L)}(\phi\nabla d)-\int_{\partial (G_s(L)\cap M')}\frac{\langle\phi\nabla d,\nu_{\partial M}\rangle}{|(\nu_{\partial M})^\top|}\\
&\leq 0.
\end{align*}
This implies
\begin{equation}\label{area decrea}
\area(G_t(L)\cap M')\leq \area(G_0(L)\cap M')=\area(L),\ \ \forall L\subseteq M_{\beta+2\eta}.
\end{equation}
Now let $S>0$ be such that $G_S(d^{-1}(\beta+2\eta))\cap M'=d^{-1}(\beta+\frac{a}{2})$ and then choose a smooth non-negative function $h:[0,1]\rightarrow [0,S]$ such that $h(t)=0$ for $t<\frac{t_0}{2}$ and $h(t)=S$ for $t\geq t_0$. Set
\begin{equation}
\Gamma'_t=G_{h(t)}(\Gamma_t)\cap M'.
\end{equation}
Then if $t\leq \frac{t_0}{2}$, $\Gamma'_t=\Gamma$; if $t\geq\frac{t_0}{2}$, it follows from the definition of $t_0$ that $\Gamma_t\subseteq M_{2\eta}$ and then $\area(G_{h(t)}(\Gamma_t))\leq \area(\Gamma_t)$ by (\ref{area decrea}). For the last requirement, first notice that $h(t)=S$ if $t\geq t_0$, then by combining the results $G_S(d^{-1}(\beta+2\eta))\cap M'=d^{-1}(\beta+\frac{a}{2})$ with $\Sigma_t\subseteq M_{\beta+2\eta}$, we conclude that $\Sigma'_t\subseteq M_{\beta+\frac{a}{2}}$. This completes the proof of the lemma.
\end{proof}

We can now finish the argument. For any $\{\Gamma_t^k\}\in\Lambda$, there always exists $\epsilon_k>0$ such that
\begin{equation}\label{local depart}
\mathcal H^n(\Gamma_t^k)\leq\area(\Sigma')+\delta\ \ \text{ for all } t\in[0,2\epsilon_k].
\end{equation}
Then take $t_0=\epsilon_k$ in the lemma above, we can obtain a better sweepout $\{\Sigma_t^k\}$, which will satisfies (\ref{pull}). In fact,
\begin{equation}
\mathcal H^n(\Sigma^k_t)\geq W(M',\partial M',\Sigma',\Lambda)-\delta,
\end{equation}
implies
\begin{equation}
\mathcal H^n(\Gamma^k_t)\geq W(M',\partial M',\Sigma',\Lambda)-\delta=\area(\Sigma')+\delta
\end{equation}
and then by (\ref{local depart}), we have $t\geq 2\epsilon_k$. Now use Lemma \ref{push forward} (\ref{away}), we obtain
\begin{equation}
d(\Sigma_t^k,\Sigma')\geq\frac{a}{2}.
\end{equation}

Now modifying the arguments of min-max theory for compact manifold with boundary in \cite{DR}, we can get a free boundary minimal surface $(\Gamma,\partial\Gamma)$  with $\partial\Gamma\subseteq\partial M$. Let us sketch the main steps here.

Let $\{\{\Sigma_t^n\}_{t\in[0,1]}\}_{n=1}^\infty$ be the minimizing sequence. First we follow the tightening process by De Lellis-Ramic \cite[Proposition 3.2]{DR}, where we deform each $\{\Sigma_t\}_{t\in[0,1]}$ to another one $\{\tilde\Sigma_t\}_{t\in[0,1]}$ such that every min-max sequence $\{\tilde\Sigma_{t_k}^k\}$ converges to a stationary varifold. Since those $\Sigma_t^k$ with volume close to $W(M',\partial M',\Sigma')$ have a distance $a/2>0$ away from $\Sigma'$, we can take $\tilde\Sigma_t^k=\Sigma_t^k$ near $\Sigma'$. Hence $\{\tilde\Sigma_t^k\}$ can be chosen to satisfy (\ref{pull}).

Now for an almost minimizing min-max sequence $\{\tilde\Sigma_{t_k}^k\}$ (see \cite[Proposition 4.3]{DR}), it follows that $\tilde\Sigma_t^k$ always have a distance $a/2$ away from $\Sigma'$ for large $k$ by (\ref{pull}). Hence all the Definition 4.1 and Proposition 4.3 in \cite{DR} are well-defined.

Finally we show that the limit of the almost minimizing min-max sequence is supported on some embedded free boundary minimal hypersurface. These were done by De Lellis-Ramic \cite[\S 10.3, \S 10.4]{DR}. There are no differences here. Hence we can get a free boundary minimal hypersurface $(\Gamma,\partial\Gamma)$  with $\partial\Gamma\subseteq\partial M$. Since the minimizing sequence have fixed distance to $\Sigma'$, we conclude that $\Gamma\cap\Sigma'=\emptyset$. However this contradicts with following Frankel's property by Fraser-Li.
\end{proof}

\begin{lemma}[\cite{FL} Lemma 2.5]\label{intersection}
Let $(M,\partial M,g)$ be a connected, compact manifold with convex boundary and $\ric\geq0$, then any two properly embedded connected free boundary minimal hypersurfaces $\Sigma$ and $\Sigma'$ must intersect.
\end{lemma}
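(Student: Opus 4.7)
The plan is to adapt the classical Frankel argument to the free boundary setting. Suppose for contradiction that $\Sigma\cap\Sigma'=\emptyset$. By compactness of $M$, there exists a unit-speed length-minimizing geodesic $\gamma:[0,L]\to M$ with $p:=\gamma(0)\in\Sigma$, $q:=\gamma(L)\in\Sigma'$, and $L=\dist(\Sigma,\Sigma')$. Standard first-variation arguments force $\gamma$ to meet $\Sigma$ and $\Sigma'$ orthogonally. When $p\in\partial\Sigma$, the free boundary condition $\Sigma\perp\partial M$ places $\gamma'(0)\in T_p(\partial M)$, and convexity of $\partial M$ then ensures that $\gamma$ enters the interior of $M$ immediately and remains there until reaching $q$; the analogous statement holds at $q$.

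Pick a parallel orthonormal frame $\{E_1,\dots,E_n\}$ along $\gamma$ with each $E_i\perp\gamma'$. Since $\gamma'(0)\perp T_p\Sigma$ and both are hyperplanes in $T_pM$, we have $T_p\Sigma=\gamma'(0)^\perp$, so $\{E_i(0)\}$ is an orthonormal basis of $T_p\Sigma$; parallel transport yields the same for $\{E_i(L)\}$ in $T_q\Sigma'$. For each $i$, build a variation $\gamma_s^i$ whose endpoints trace smooth curves in $\Sigma$ and $\Sigma'$ tangent to $E_i(0)$ and $E_i(L)$ respectively, with variational field $E_i$ along $\gamma$; convexity of $\partial M$ allows these endpoint curves and the interpolating geodesics to be chosen to lie in $M$. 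The second variation of arclength then gives
\[
L_i''(0)=-\int_0^L R(E_i,\gamma',E_i,\gamma')\,dt-h^{\Sigma}(E_i(0),E_i(0))-h^{\Sigma'}(E_i(L),E_i(L)),
\]
where the second fundamental forms are computed with respect to the unit normals $\gamma'(0)$ and $-\gamma'(L)$. Summing in $i$, and using $\ric\ge 0$ together with the minimality identities $\mathrm{tr}\,h^{\Sigma}=0$ and $\mathrm{tr}\,h^{\Sigma'}=0$,
\[
\sum_{i=1}^n L_i''(0)=-\int_0^L \ric(\gamma',\gamma')\,dt\le 0.
\]

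Since $\gamma$ minimizes length among curves joining $\Sigma$ and $\Sigma'$, each $L_i''(0)\ge 0$, so every inequality above is an equality. The remaining work is to promote this rigidity into an outright geometric contradiction: equality in $L_i''(0)=0$ for every $i$ forces the $E_i$ to generate genuine variations through length-$L$ geodesics joining $\Sigma$ to $\Sigma'$, so $\gamma$ extends to a smooth equidistant family connecting an open piece of $\Sigma$ to an open piece of $\Sigma'$. Maximally extending this family and invoking compactness, connectedness, and the free boundary condition forces this product region to exhaust $M$, collapsing the distinction between $\Sigma$ and $\Sigma'$. The main obstacle is precisely this borderline analysis: with only $\ric\ge 0$ and non-strict convexity of $\partial M$, strict negativity of $\sum L_i''(0)$ cannot be extracted in one step, so the rigidity case must be unpacked with care.
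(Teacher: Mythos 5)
The paper does not prove this lemma itself: it is quoted verbatim from Fraser--Li \cite{FL}*{Lemma 2.5}, whose proof is exactly the Frankel-type second variation argument you outline, so the comparison here is with that cited argument. Your outline reproduces its first half correctly (minimizing geodesic, orthogonality, parallel frame, $\sum_i L_i''(0)=-\int_\gamma\ric(\gamma',\gamma')\le 0$ after using minimality), but it has two genuine gaps. First, the boundary foot point case is handled backwards. If $p\in\partial\Sigma\subset\partial M$, then $\gamma'(0)=\pm\n(p)$ is tangent to $\partial M$, and convexity does \emph{not} make $\gamma$ ``enter the interior immediately and remain there'': for a strictly convex boundary, a geodesic with initial velocity tangent to $\partial M$ satisfies $\frac{d^2}{dt^2}\big(r\circ\gamma\big)(0)=\hess r(\gamma',\gamma')\neq 0$ with the sign that pushes it \emph{out} of $M$, so such a configuration is impossible; the correct use of convexity is to rule out foot points on $\partial M$ (and interior tangencies of $\gamma$ with $\partial M$) altogether, not to legitimize them. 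As written, your variations $\gamma_s^i$ at a boundary foot point are not justified.

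Second, and more seriously, you stop exactly where the proof has to be done. With only $\ric\geq 0$ the summed inequality can be an equality, and the sentence ``rigidity forces an equidistant product region exhausting $M$'' is a description of what one would like, not an argument. Worse, your closing remark reads the hypotheses as \emph{non-strict} convexity of $\partial M$; under that reading the lemma is false: in the flat product $T^{n-1}\times[0,1]$ (totally geodesic boundary, $\ric\equiv 0$) the hypersurfaces $S\times[0,1]$, for parallel flat totally geodesic tori $S\subset T^{n-1}$, are disjoint properly embedded free boundary minimal hypersurfaces. In this paper ``convex'' means $h^{\partial M}>0$ (this is used explicitly in the index computation in the proof of Theorem \ref{main thm}, where $h^{\partial M}(\n,\n)>0$), and that strictness is precisely the ingredient Fraser--Li use to close the argument: it excludes the boundary cases above and kills the borderline equality configuration produced by $\ric\ge 0$. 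Since your proposal neither invokes this strictness nor completes the equality analysis, it does not yet prove the lemma.
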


Now we can obtain the sweepout from orientable free boundary minimal hypersurface:
\begin{proof}[Proof of Proposition \ref{ori}]
By function $r$ in the Lemma \ref{dist fctn}, we can get a sweepout in a neighborhood $U$ of $\Sigma$. Moreover, $M$ will be separated into $M^+$ and $M^-$ by $\Sigma$. The boundary of $M^{+}_{\epsilon}$ has two parts: $\Sigma_{\epsilon}$ and $\partial M\cap M^+_{\epsilon}$. By the Theorem \ref{part sweepout}, there is a sweepout $\{\Sigma'_t\}$ of $(M^+_{\epsilon},\partial M^+_{\epsilon},\Sigma_\epsilon)$ such that
\begin{enumerate}
  \item $\Sigma'_t=\Sigma_t+\epsilon$ for $0\leq t\leq\varepsilon$;
  \item $\area(\Sigma'_t)\leq\area(\Sigma_\epsilon)\leq\area(\Sigma)$.
\end{enumerate}
We can also construct an sweepout for $(M^-_{\epsilon},\partial M^-_{\epsilon},\Sigma_\epsilon)$ by the same way, and then patch them all together to we get a sweepout of $M$ which satisfies all the requirements in the Proposition \ref{ori}.
\end{proof}

\subsection{Non-orientable Case}\label{Non-orientable}
For the non-orientable case, $\Sigma$ won't separate $M$, otherwise $\Sigma$ would be part of $\partial(M\setminus\Sigma)$, which must be orientable. Hence $\tilde M=M\setminus\Sigma$ would be a connected compact manifold with piecewise smooth boundary. One part is $\partial M\setminus\partial\Sigma$ and the other is $\tilde\Sigma$, which is the double cover of $\Sigma$. Since $\Sigma$ is a free boundary minimal surface, two parts of $\partial\tilde M$ will meet orthogonally. Now take two $\tilde M$ and patch them together by identifying two $\tilde\Sigma$. Denote the new manifold by $\bar M$. Then $\bar M$ is the double cover of $M$. More importantly, $\tilde\Sigma$ is an orientable free boundary minimal hypersurface and hence we can use the sweepout above to get the the sweepouts here. Set
\begin{align*}
\mathcal S_-:=\{\Sigma^n:&\Sigma^n \text{ is an embedded non-orientable connected free}\\
                         &\text{ boundary minimal hypersurface in } M\}.
\end{align*}
\begin{proposition}\label{nonori}
For any $\Sigma\in\mathcal S_-$, there exists a family $\{\Sigma_t\}_{t\in[0,1]}$ of closed sets of $M$ such that
\begin{itemize}
  \item $\Sigma_0=\Sigma$;
  \item $\{\Sigma_t\}$ satisfies $(s1)(sw1)(sw2)(sw3)$ in Definition \ref{def of sweepout};
  \item $\max\mathcal H^n(\Sigma_t)=2\area(\Sigma)$ and $\mathcal H^n(\Sigma_t)<2\area(\Sigma)$;
  \item in $(s2)$, only fails when $t\rightarrow 0$, $\mathcal H^n(\Sigma_t)\rightarrow 2\area(\Sigma)$;
  \item in $(s3)$, only fails when $t\rightarrow 0$, $\Sigma_t\rightarrow 2\Sigma$.
\end{itemize}
\end{proposition}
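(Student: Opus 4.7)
The strategy is to pass to the orientation double cover and project a sweepout down. Since $\Sigma$ is non-orientable in the orientable $M$, it is one-sided and does not separate $M$; as described in the paragraph preceding the statement, $\tilde M:=M\setminus\Sigma$ is a connected compact manifold whose piecewise smooth boundary is $(\partial M\setminus\partial\Sigma)\cup\tilde\Sigma$, with $\tilde\Sigma$ the orientation double cover of $\Sigma$, and the two boundary pieces meet orthogonally because $\Sigma$ is free boundary minimal. Gluing two copies of $\tilde M$ along their $\tilde\Sigma$-boundaries produces an orientable Riemannian manifold $\bar M$ with convex boundary and $\ric\geq 0$; the metric is smooth across $\tilde\Sigma$ since $\tilde\Sigma$ is minimal and orthogonal to $\partial M$. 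The covering projection $\pi:\bar M\to M$ is a local isometry of degree two with deck involution $\tau$, and $\tilde\Sigma$ is a connected orientable free boundary minimal hypersurface in $\bar M$ with $\area(\tilde\Sigma)=2\area(\Sigma)$.

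I would then apply Proposition \ref{ori} to $\tilde\Sigma\subset\bar M$, obtaining a sweepout $\{\tilde\Sigma_s,\tilde\Omega_s\}_{s\in[-1,1]}$ of $\bar M$ with $\tilde\Sigma_0=\tilde\Sigma$, $\area(\tilde\Sigma_s)<2\area(\Sigma)$ for $s\neq 0$, and smooth foliation of a neighborhood of $\tilde\Sigma$. By the structure of the construction in Proposition \ref{ori} (which sweeps each of the two sides of $\tilde\Sigma$ separately via Theorem \ref{part sweepout}), the half $s\in(0,1]$ sweeps one copy $\tilde M^+\subset\bar M$ and stays disjoint from $\tilde\Sigma$; since $\pi$ is one-to-one on $\tilde M^+\setminus\tilde\Sigma$, the map $\pi|_{\tilde\Sigma_s}$ is injective. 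Then define
\begin{equation*}
\Sigma_0:=\Sigma,\qquad \Sigma_t:=\pi(\tilde\Sigma_t)\text{ for }t\in(0,1],\qquad \Omega_t:=\pi(\tilde\Omega_t\cap\tilde M^+).
\end{equation*}

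The required properties transfer from $\bar M$ to $M$ via the local isometry $\pi$. For $t>0$, injectivity of $\pi|_{\tilde\Sigma_t}$ yields $\mathcal H^n(\Sigma_t)=\area(\tilde\Sigma_t)<2\area(\Sigma)$ and smoothness of $\Sigma_t$ off the finite set $\pi(\tilde P_t)$, giving (s1) and the strict area bound; conditions (sw1), (sw2), and the requirements $\Omega_1=M$ and $\Sigma_t\cap\Sigma=\emptyset$ in (sw3) follow by direct pushforward. The smooth foliation near $\Sigma$ required in (sw3) is realized by the level sets of $r:=\dist_M(\cdot,\Sigma)$, which for $t\in(0,\epsilon)$ are smooth parallel hypersurfaces diffeomorphic to $\tilde\Sigma$, each wrapping twice around $\Sigma$. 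At $t=0$, smooth convergence $\tilde\Sigma_s\to\tilde\Sigma$ in $\bar M$ projects to Hausdorff convergence $\Sigma_s\to\Sigma$ in $M$, but because $\tilde\Sigma$ is a two-to-one cover of $\Sigma$ the area jumps, $\mathcal H^n(\Sigma_s)\to 2\area(\Sigma)>\area(\Sigma)=\mathcal H^n(\Sigma_0)$, realizing the failure of (s2); and as varifolds $\Sigma_s\to 2\Sigma$, realizing the failure of (s3). The main subtlety is controlling the degenerate behavior at $t=0$ to be exactly as advertised and not worse; this is guaranteed by the fact that near $\tilde\Sigma$ the $\bar M$-sweepout is a genuine smooth foliation, so that projection produces a one-parameter family of parallel wrap-twice hypersurfaces collapsing cleanly onto $\Sigma$ with multiplicity two.
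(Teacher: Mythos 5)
Your proposal is correct and follows essentially the same route as the paper: pass to the doubled manifold $\bar M$ glued along the orientation double cover $\tilde\Sigma$, apply Proposition \ref{ori} there, and project one half of the resulting sweepout back to $M$ so that the family collapses onto $\Sigma$ with multiplicity two as $t\to 0$. Your write-up is in fact more detailed than the paper's own (which only sketches the identification of $M\setminus\Sigma$ with a component of $\bar M\setminus\tilde\Sigma$ and leaves the verification of the listed properties to the reader), and your choice $\Sigma_0=\Sigma$ matches the statement.
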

\begin{proof}
Consider about the double cover $\bar M$ and construct the sweepout of the orientable manifold $(\bar M,\partial M)$. In order to define the sweepout of $M$, we can identify $M\setminus\Sigma$ with a component of $\bar M\setminus\bar\Sigma$. Finally, define $\Sigma_0=\emptyset$. One can check all the requirements in Proposition \ref{nonori}.
\end{proof}

\section{Almgren-Pitts discrete setting for manifolds with boundary}\label{discrete settings}
Recently, Li-Zhou \cite{LZ} developed the Almgren-Pitts min-max theory for any compact manifold with boundary. In this section, we give a brief introduction to these theory. For the basic notations in geometric measure theory, we refer to \cites{Pit, Alm62, MN14}. The following homotopy relations were introduced in \cite[\S 4.1]{Pit}. We refer to \cite[\S 5.1]{LZ} for the case of compact manifolds with boundary, which we focus on here.

Let $(M^{n+1},\partial M,g)$ be some Riemannian manifold with convex boundary and $2\leq n\leq 6$. We assume more that $(M,\partial M,g)$ is embedded in some $\R^N$ for some $N$ large enough. Let us denote $\mathbf I_k(M)$ be the space of $k$-dimensional integral currents with support in $M$, and
\begin{gather*}
\mathbf I_k (M,\partial M):=\{T:T\in\mathbf I_k(M),\mathrm{spt}(\partial T)\subseteq\partial M\},\\
\mathcal Z_k(\partial M)=\{T: T\in\mathbf I_k(\partial M), \partial T=0\},\\
Z_k(M,\partial M)=\{T: T\in\mathbf I_k(M,\partial M), \mathrm{spt}(\partial T)\in\partial M\}.
\end{gather*}
We will say $T$ and $S$ are in the same equivalent class if $T,S\in Z_k(M,\partial M)$ and $T-S\in \mathbf I_k(\partial M)$. We denote $\mathcal Z_k(M,\partial M)$ as all the equivalent class in $Z_k(M,\partial M)$ and $\pi: Z_k(M,\partial M)\rightarrow \mathcal Z_k(M,\partial M)$ as the projection. Moreover, for any $T\in \mathcal Z_k(M,\partial M)$, there is a \emph{canonical representation} $\zeta(T)\in Z_k(M,\partial M)$ such that $\zeta(T)\llcorner\partial M=0$.

Given any $T\in\mathbf I_k(M,\partial M)$, let $|T|, \Vert T\Vert$ be the integral varifold and Radon measure associated with $T$ respectively. Given any surface $\Sigma$ with possible non-empty boundary or open set $\Omega\subseteq M$, we denote $\llbracket \Sigma\rrbracket ,\llbracket \Omega\rrbracket $, and $[\Sigma],[\Omega]$ as the integral currents and integral varifold, respectively.

We also need the metrics on these spaces. Let $\mathbf M$ be the mass norm on $\mathbf I_k(M)$ and $\mathcal F$ the flat metric on it. In the space of relative cycles, the flat metric and mass norm are defined to be
\begin{gather*}
\mathcal F(P,Q)=\inf\{\mathcal F(S+R,T):S\in P,T\in Q,R\in\mathbf I_k(M)\},\\
\mathbf M(P)=\inf\{\mathbf M(T+R):R\in\mathbf I_k(\partial M)\}.
\end{gather*}
If we use the standard representation of $P\in\mathcal Z_k(M,\partial M)$
In the following of the papar, we will focus on the $1$-sweepout, hence the notations about cell complex will be restricted to this case.
\begin{definition}[\cite{Zho} Definition $4.1$]Set $I=[0,1]$.
\begin{itemize}
  \item The $0$-complex $I_0=\{[0],[1]\}$;
  \item For any $i\in\N$, $I(1,j)$ has $0$-complex $\{[\frac{i}{3^j}]\}$ and $1$-complex $\{[\frac{i}{3^j},\frac{i+1}{3^j}]\}$. We always denote $I(1,j)_p$ the set of $p$-complex of $I(1,j)$;
  \item Given $\alpha\in I(1,j)_1$, we denote $\alpha(k)_p$ as the $p$-complex of $I(1,j+k)$ contained in $\alpha$;
  \item The boundary homeomorphism $\partial: I(1,j)_1\rightarrow I(1,j)_0$ is $\partial[a,b]=[b]-[a]$;
  \item The distance function $d:I(1,j)_0\times I(1,j)_0\rightarrow \mathbb\N$ is $d(x,y)=3^j|x-y|$.
\end{itemize}
\end{definition}
\begin{definition}[Fineness]
For any $\phi:I(m,j)_0\rightarrow\mathcal Z_n(M,\partial M)$, the $\mathbf M$-\emph{fineness of} $\phi$ is
\begin{equation}
\mathbf f_{\mathbf M}(\phi):=\sup\big\{\frac{\mathbf M(\phi(x)-\phi(y))}{d(x,y)}:x,y\in I(m,j)_0,x\neq y\big\}.
\end{equation}
\end{definition}

\begin{definition}[Homotopy for mappings]
Let $\phi_i:I(1,j_i)_0\rightarrow \mathcal Z_n(M,\partial M)$ for $i=1,2$ and $\delta>0$, we say $\phi_1$ \emph{is} $1$-\emph{homotopic to} $\phi_2$ \emph{with $\mathbf M$-fineness} $\delta$ if there exists $j_3>j_1,j_2$ and
\begin{equation*}
\psi: I(1,j_3)_0\times I(1,j_3)_0\rightarrow \mathcal Z_n(M,\partial M),
\end{equation*}
with
\begin{itemize}
  \item $\mathbf f_{\mathbf M}(\psi)\leq\delta$;
  \item $\psi(i-1,x)=\phi_i(n(j_3,j_i)(x)),i=1,2$;
  \item $\psi(I(1,j_3)_0\times I_0(i,j_3)_0)=0$.
\end{itemize}
\end{definition}

\begin{definition}
For a sequence of
\begin{equation*}
\phi_i:I(1,j_i)_0\rightarrow\mathcal Z_n(M,\partial M),
\end{equation*}
$\{\phi_i\}_{i\in \mathbb N}$ is a $(1,\mathbf M)$-\emph{homotopy sequence of mappings into} $(\mathcal Z_n(M,\partial M),\{0\})$ if $\phi_i$ is $1$-homotopic to $\phi_{i+1}$ with fineness $\delta_i\rightarrow 0$, and
\begin{equation}
\sup_{i}\{\mathbf M(\phi_i(x)):x\in\dmn\phi_i\}<\infty.
\end{equation}
\end{definition}

\begin{definition}[Homotopy for sequence of mappings]
Let $S_1=\{\phi_i^1\}_{i\in\N}$ and $S_2=\{\phi_i^2\}_{i\in\N}$ be two $(1,\mathbf M)$-homotopy sequence of mappings into $(\mathcal Z_n(M,\partial M),\{0\})$, we say $S_1$ is homotopic to $S_2$ if $\phi_i^1$ is $1$-homotopic to $\phi_i^2$ with fineness $\delta_i\rightarrow 0$.
\end{definition}

Denote $\pi^\sharp_1(\mathcal Z_n(M,\partial M,\mathbf M),\{0\})$ the space of all equivalent classes of $(1,\mathbf M)$-homotopy sequences of mappings into $(\mathcal Z_n(M,\partial M),\{0\})$. Similarly, we can define $\pi^\sharp_1(\mathcal Z_n(M,\partial M,\mathcal F),\{0\})$. By \cite[Throrem 4.6]{Pit}, these two homotopy groups are isomorphic, furthermore, they are both homotopic to $H_n(M,\partial M)$.

Let $\Pi\in\pi^\sharp_1(\mathcal Z_n(M,\partial M),\{0\})$, then for any $S=\{\phi_i\}\in\Pi$, we define
\begin{equation}
\mathbf L(S)=\limsup_{i\rightarrow\infty}\max_{x\in\dmn\phi_i}\mathbf M(\phi_i(x)),
\end{equation}
and the \emph{width of} $\Pi$
\begin{equation}
\mathbf L(\Pi)=\inf_{S\in\Pi}\mathbf L(S).
\end{equation}

In \cite{LZ}, Martin Li and Xin Zhou proved the following min-max theorem:
\begin{theorem}[\cite{LZ} Theorem 5.21 and Theorem 6.2]
For any homotopy class $\Pi\in\pi^\sharp_1(\mathcal Z_n(M,\partial M,\mathbf M),\{0\})$, there exists an integral varifold $V$ such that
\begin{itemize}
  \item $\Vert V\Vert (M)=\mathbf L(\Pi)$;
  \item $V$ is almost minimizing in small annuli with free boundary;
  \item $V=\sum n_i[\Sigma_i]$ where $n_i\in\N$ and each $(\Sigma_i,\partial\Sigma_i)\subseteq(M,\partial M)$ is a smooth compact connected embedded free boundary minimal hypersurface.
\end{itemize}
\end{theorem}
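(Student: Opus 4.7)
The plan is to follow the Almgren-Pitts discrete min-max framework, now adapted to the free-boundary setting. Given an optimal sequence $\{\phi_i\}\in\Pi$ with $\mathbf L(\{\phi_i\})\to\mathbf L(\Pi)$, the argument proceeds in three stages: a pull-tight deformation so that min-max limits are stationary, extraction of an almost-minimizing subsequence via Pitts' combinatorial argument \cite{Pit}, and regularity of the limit varifold via replacement.

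For the \emph{pull-tight}, I would construct a continuous deformation on $(\mathcal Z_n(M,\partial M),\mathcal F)$ that strictly decreases mass outside the set of stationary integral varifolds with free boundary. Locally near a non-stationary $V$, the deformation is realized by isotopies generated by vector fields $X\in\mathfrak X(M,\partial M)$ with $\delta V(X)<0$; the requirement $X|_{\partial M}\in T(\partial M)$ is what distinguishes this from the closed case. After deforming each $\phi_i$, any min-max convergent subsequence $\phi_i(x_i)\to V$ with mass tending to $\mathbf L(\Pi)$ has $V$ stationary, yielding $\Vert V\Vert(M)=\mathbf L(\Pi)$. Next, Pitts' dichotomy at each pair $(p,r)$ asserts that either $\{\phi_i\}$ can be deformed inside the relative ball $B_r(p)\cap M$ to strictly reduce its max mass below $\mathbf L(\Pi)$, or the sequence is already almost minimizing there. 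If the first alternative held at two well-separated scales around every point, one could patch the local deformations combinatorially to contradict minimality; hence one extracts a min-max subsequence $\phi_i(x_i)\to V$ that is almost minimizing in every small (free-boundary) annulus around every point of $\mathrm{spt}\Vert V\Vert$. The free-boundary refinement requires all interpolation vector fields to lie in $\mathfrak X(M,\partial M)$.

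Finally, I would prove regularity. At interior points the almost-minimizing property produces a local replacement that is a stable minimal hypersurface; the Schoen-Simon curvature estimates \cite{SS} (valid for $n\le 6$) upgrade this to smoothness, and local replacements are glued by unique continuation. At free-boundary points the replacement is a stable free-boundary minimal hypersurface, and the boundary regularity of Gr\"uter-Jost \cite{GJ} and De Lellis-Ramic \cite{DR} gives smoothness up to $\partial M$ with orthogonal contact; convexity of $\partial M$ rules out concentration along the boundary. The integer multiplicity decomposition $V=\sum n_i[\Sigma_i]$ is then inherited from the integrality of $\phi_i(x_i)$. The main obstacle is precisely this free-boundary regularity step: one must show the replacement built from isotopies tangent to $\partial M$ is a genuinely stable free-boundary minimal hypersurface, and that Schoen-Simon-type curvature estimates survive in the presence of the boundary. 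The convexity hypothesis on $\partial M$ is what makes this feasible, allowing a reduction to an interior-type statement after a suitable reflection or doubling argument.
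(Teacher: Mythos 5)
This statement is not proved in the paper at all: it is quoted verbatim from Li--Zhou \cite{LZ} (their Theorems 5.21 and 6.2), so there is no internal proof to compare against; your proposal has to be measured against the cited source. At the level of architecture your outline does match what Li--Zhou do: a pull-tight deformation using vector fields in $\mathfrak X(M,\partial M)$ so that min-max limits are stationary with free boundary, Pitts' combinatorial argument to extract a subsequence that is almost minimizing in small annuli (with free boundary), and then regularity via replacements, Schoen--Simon-type curvature estimates for $n\le 6$, and Allard-type boundary regularity in the spirit of Gr\"uter--Jost.

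The genuine gap is in your last step. You lean on convexity of $\partial M$ and propose to ``reduce to an interior-type statement after a suitable reflection or doubling argument.'' Neither is how the cited theorem is proved, and the reflection idea does not work as stated: doubling $(M,g)$ across $\partial M$ produces a metric that is only Lipschitz across the boundary unless $\partial M$ is totally geodesic, so the Schoen--Simon interior estimates and the interior replacement theory cannot be applied directly to the doubled manifold. More importantly, the theorem you are proving holds for \emph{any} compact manifold with boundary --- the paper itself emphasizes that Li--Zhou removed the convexity hypothesis needed by Gr\"uter--Jost and De Lellis--Ramic --- so a proof whose regularity step is ``made feasible'' by convexity is proving a weaker statement than the one quoted (convexity enters this paper only for the geometric applications: Frankel property, foliations, instability). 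A correct argument must instead work directly in the free-boundary setting: set up the min-max with equivalence classes of relative cycles $\mathcal Z_n(M,\partial M)$ (mass and flat norms modulo $\mathbf I_n(\partial M)$), define almost minimizing via comparison cycles rather than isotopies, build free-boundary replacements that are stable with respect to variations tangent to $\partial M$, invoke curvature estimates for stable \emph{free boundary} minimal hypersurfaces, and --- the real technical crux in \cite{LZ} --- handle the possibility that the limit varifold touches $\partial M$ tangentially (non-properly), which your sketch does not address.
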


\section{Discretization}\label{Discretization}
In this section, we discretize the continuous sweepouts in Section \ref{continuous construction} to the Almgren-Pitts setting. We will use the following Discretization Theorem by Li-Zhou \cite{LZ}:
\begin{theorem}[\cite{LZ} Theorem 5.12]\label{discrete}
Given a map
\begin{equation*}
\Phi: I^m\longrightarrow \Z_n(M,\partial M),
\end{equation*}
which is continuous in the $\mathcal F$-topology and satisfying the following:
\begin{itemize}
  \item $\sup_{x\in I^m}\mathbf M(\Phi(x))<\infty$;
  \item $\lim_{r\rightarrow 0}\m(\Phi,r)=0$;
  \item $\Phi|_{I_0^m}$ is continuous in the $\F$-metric,
\end{itemize}
then there exists a sequence of mappings
\begin{equation*}
\phi_i: I(m,j_i)_0\longrightarrow \Z_n(M,\partial M),
\end{equation*}
with $j_i<j_{i+1}$ and a sequence of positive numbers $\delta_i\rightarrow 0$ such that
\begin{enumerate}
  \item $S=\{\phi_i\}$ is an $(m,M)$-homotopy sequence with $\mathbf M$-fineness
  $\mathbf f_{\mathbf M}(\phi_i)<\delta_i$;
  \item There exists a sequence of $k_i$ such that for all $x\in I(m,j_i)_0$,
  \begin{equation*}
  \mathbf M(\phi_i(x))\leq \sup\{\mathbf M(\Phi(y)):\alpha\in I(m,k_i)_m, x, y\in\alpha\}+\delta_i.
  \end{equation*}
  In particular, we have $\mathbf L(S)\leq\sup_{x\in I^m}\mathbf M(\Phi(x))$.
  \item $\sup\{\mathcal F(\phi_i(x)-\Phi(x)):x\in I(m,j_i)_0\}<\delta_i$;
  \item $\mathbf M(\phi_i(x))<\mathbf M(\Phi(x))+\delta_i$ for all $x\in I(m,j_i)_0$.
\end{enumerate}
\end{theorem}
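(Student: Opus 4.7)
The plan is to adapt the Almgren--Pitts style discretization (as implemented by Marques--Neves in the closed case) to the relative cycle setting $\Z_n(M,\partial M)$. The fundamental obstruction is that $\mathbf{M}$-fineness of a discrete sample is not controlled merely by the $\mathcal{F}$-continuity of $\Phi$; the decisive input is the no-concentration hypothesis $\lim_{r\to 0}\m(\Phi,r)=0$, which upgrades $\mathcal{F}$-closeness to near $\mathbf{M}$-closeness between nearby values of $\Phi$ after a suitable interpolation.

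First I would establish an interpolation lemma for relative cycles: given $T_1,T_2\in\Z_n(M,\partial M)$ with canonical representatives $\zeta(T_1),\zeta(T_2)$ satisfying $\mathcal{F}(\zeta(T_1)-\zeta(T_2))<\varepsilon$ and uniformly small mass concentration in balls of radius $r$, there exists $T\in\Z_n(M,\partial M)$ with $\mathbf{M}(T-T_i)\leq\eta(\varepsilon,r)$ where $\eta\to 0$ as $\varepsilon,r\to 0$. In the closed case this is Almgren's classical construction via push-onto-a-fine-grid plus coning. In the relative setting one must carry out the slicing and coning inside a bicollar of $\partial M$ so that the cone error current lies in $\mathbf{I}_n(\partial M)$ and becomes trivial after applying the projection $\pi:Z_n(M,\partial M)\to\Z_n(M,\partial M)$.

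Next I would perform the discretization inductively on the skeleta of $I(m,j_i)$ for $j_i$ chosen large enough that both $\m(\Phi,3^{-j_i})$ and the modulus of $\mathcal{F}$-continuity of $\Phi$ at scale $3^{-j_i}$ are comparable to $\delta_i/C$. Defining $\phi_i(x)$ by applying the interpolation lemma along the edges of $I(m,j_i)$ and then propagating dimension-by-dimension through the higher skeleta yields property (1), and simultaneously (4); property (3) is immediate from the $\mathcal{F}$-closeness of the interpolants to $\Phi$; estimate (2) follows by choosing $k_i\gg j_i$ and comparing $\mathbf{M}(\phi_i(x))$ to the supremum of $\mathbf{M}(\Phi)$ over a cube $\alpha\in I(m,k_i)_m$ containing $x$. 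The homotopies $\phi_i\sim\phi_{i+1}$ are constructed by passing to a common refinement $I(m,j_3)_0$ and running the same interpolation procedure between the two grids, producing a map of the required $\mathbf{M}$-fineness.

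The hardest point is the interpolation lemma in the relative setting: the canonical representative $\zeta(T)$ may have significant mass pushed against $\partial M$, and the standard push-to-a-grid argument must be modified so that the error current lies in $\mathbf{I}_n(\partial M)$ and thus vanishes under $\pi$. I would handle this by performing the cone construction in two stages---first pushing the bulk away from $\partial M$ using a cut-off nearest-point retraction (available because $\partial M$ is convex), then dealing with the residual boundary layer by a separate slicing adapted to the bicollar structure of $\partial M$. The hypothesis that $\Phi|_{I_0^m}$ is continuous in the $\mathbf{F}$-metric then serves as the base case of the inductive construction at the corners of the cube, where no prior interpolation is available to control the mass.
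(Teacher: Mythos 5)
This statement is not proved in the paper at all: it is quoted, with its numbering, from Li--Zhou \cite{LZ} (their Theorem 5.12), so there is no internal argument to compare yours against; the paper only remarks that in its application $m=1$, the third hypothesis is then trivial, and the no-concentration condition is checked as in Zhou's closed case. Judged on its own terms, your outline does follow the strategy of the actual proof in \cite{LZ}, which adapts the Almgren/Marques--Neves discretization to relative cycles: sample $\Phi$ on a fine grid, use $\lim_{r\to 0}\m(\Phi,r)=0$ to upgrade $\mathcal F$-closeness to approximate $\mathbf M$-closeness via an interpolation/deformation step, build $\phi_i$ skeleton by skeleton, and obtain the homotopies between $\phi_i$ and $\phi_{i+1}$ by running the same construction on a common refinement.

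As a proof, however, there is a genuine gap: all of the substance is delegated to the ``interpolation lemma for relative cycles,'' which you state but do not prove, and the sketch you offer for it is not the mechanism that works. The theorem of Li--Zhou holds for an arbitrary compact manifold with boundary; no convexity of $\partial M$ enters, so a ``nearest-point retraction available because $\partial M$ is convex'' cannot be the right tool, and relying on it would make the argument both weaker than the cited statement and unjustified where the retraction distorts mass near $\partial M$. In \cite{LZ} the boundary is handled by working in Fermi coordinates, with a cut-and-paste construction and the relative isoperimetric choice (their Lemma 4.15), arranged precisely so that the error currents lie in $\mathbf I_n(\partial M)$ and die under the projection $\pi$, while the mass added by coning and the concentration at small scales are controlled simultaneously; without establishing this, conclusions (1) and (4) are not obtained. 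Finally, the role of the hypothesis that $\Phi|_{I^m_0}$ is $\F$-continuous is not a ``base case at the corners'': it is what rules out mass concentration along the faces $I^m_0$ so that the fineness and mass estimates hold there as well (and for $m=1$, the only case used in this paper, it is vacuous).
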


Here we only need the case of $m=1$ and then the third requirement in Theorem \ref{discrete} is trivial. Also, we always have
\begin{equation}
\sup_{x\in[0,1]}\mathbf M(\Phi(x))<\infty,
\end{equation}
by Proposition \ref{ori} and Proposition \ref{nonori}. For the second requirement, we need to show the function
\begin{align*}
f:[0,r_0]\times M\times[0,1]\rightarrow\R^+,\\
(r,p,x)\mapsto \Vert \Sigma_x\Vert (B_r(p)),
\end{align*}
is continuous. There are no differences with Lemma 5.3 in \cite{Zho} so we omit it here.

In order to prove the final result, we need to show that all the discrete families are corresponding to fundamental class in $H_{n+1}(M,\partial M)$. The idea here is inspired by Zhou \cite[Theorem 5.8]{Zho}. The difference is that we have boundary terms here. However, we show that all the boundary terms in $\mathbf I_n(\partial M)$, and then Constancy Theorem (see \cite[\S 26.27]{Sim}) works here.
\begin{theorem}\label{homotopy}
Given a continuous sweepout as in Proposition \ref{ori} and Proposition \ref{nonori}, and $\{\phi_i\}_{i\in\N}$ the corresponding $(1,\mathbf M)$-homotopy sequence obtained by Theorem \ref{discrete}, assume that $\Phi(x)=\llbracket \partial\Omega_x\rrbracket ,\forall x\in[0,1]$ where $\{\Omega_t\}_{t\in[0,1]}$ is a family of open sets satisfying $(sw2)(sw3)$ in Definition \ref{def of sweepout}. If $F:\pi_1^\sharp(\mathcal Z_n(M^{n+1},\partial M,\mathbf M),\{0\})\rightarrow H_{n+1}(M^{n+1},\partial M,\mathbb Z)$ is the isomorphism given by Almgren \cite[\S3]{Alm62}, then
\begin{equation}
F([\{\phi_i\}_{i\in\N}])=\llbracket M\rrbracket,
\end{equation}
where $\llbracket M\rrbracket $ is the fundamental class of $M$.
\end{theorem}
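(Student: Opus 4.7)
The strategy is to follow Zhou's closed-manifold argument \cite[Theorem 5.8]{Zho} while carefully confining every boundary discrepancy to $\mathbf I_n(\partial M)$, so that the relative Constancy Theorem \cite[\S 26.27]{Sim} applies and identifies $F([\{\phi_i\}])$ with a definite integer multiple of $\llbracket M\rrbracket$. The first step is to unwind Almgren's isomorphism explicitly. For $i$ large, the fineness $\mathbf f_{\mathbf M}(\phi_i) < \delta_i$ together with the relative isoperimetric inequality produces, for each adjacent pair $x, y \in I(1, j_i)_0$, an integral current $Q^{(i)}_{x,y} \in \mathbf I_{n+1}(M)$ with
$$\partial Q^{(i)}_{x,y} - \big(\zeta(\phi_i(y)) - \zeta(\phi_i(x))\big) \in \mathbf I_n(\partial M), \qquad \mathbf M(Q^{(i)}_{x,y}) \leq C\,\delta_i^{(n+1)/n}.$$
Setting $T_i := \sum_x Q^{(i)}_{x, x+1/3^{j_i}}$ and using $\phi_i(0) = \phi_i(1) = 0$, one has $\partial T_i \in \mathbf I_n(\partial M)$, so $T_i$ is a relative $(n+1)$-cycle and $F([\{\phi_i\}]) = [T_i]$ in $H_{n+1}(M, \partial M, \mathbb Z)$.

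In parallel I introduce the canonical telescoping filling $\tilde Q_{x,y} := \llbracket \Omega_y\rrbracket - \llbracket \Omega_x\rrbracket$. The identity $\partial \llbracket \Omega_x\rrbracket = \zeta(\Phi(x)) + \llbracket \Omega_x \cap \partial M\rrbracket$, with the last summand in $\mathbf I_n(\partial M)$, shows that $\partial \tilde Q_{x,y}$ agrees with $\zeta(\Phi(y)) - \zeta(\Phi(x))$ modulo $\mathbf I_n(\partial M)$, and telescoping yields $\sum_x \tilde Q_{x, x+1/3^{j_i}} = \llbracket \Omega_1\rrbracket - \llbracket \Omega_0\rrbracket = \llbracket M\rrbracket$, using $\Omega_1 = M$ from $(sw3)$ and $\Omega_0 = \emptyset$ after reparametrizing the sweepouts of Proposition \ref{ori} and Proposition \ref{nonori}. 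Property $(3)$ of Theorem \ref{discrete} supplies decompositions $\zeta(\phi_i(x)) - \zeta(\Phi(x)) = A_i(x) + \partial B_i(x) + R_i(x)$ with $\mathbf M(A_i(x)) + \mathbf M(B_i(x)) < \delta_i$ and $R_i(x) \in \mathbf I_n(\partial M)$. Substituting these into $\partial(Q^{(i)}_{x,y} - \tilde Q_{x,y})$ and filling the residual small $n$-error $A_i(y) - A_i(x)$ by another isoperimetric application yields, for each edge, $Q^{(i)}_{x,y} - \tilde Q_{x,y} = (B_i(y) - B_i(x)) + C^{(i)}_{x,y}$ with $\mathbf M(C^{(i)}_{x,y})$ controlled by a power of $\delta_i$.

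Summing over edges, the $B_i$-terms telescope to zero (by the endpoint vanishing $\phi_i(0) = \phi_i(1) = \Phi(0) = \Phi(1) = 0$), so $T_i - \llbracket M\rrbracket = \sum_x C^{(i)}_{x, x+1/3^{j_i}}$. By choosing $\delta_i$ small enough relative to $3^{-j_i}$ in the discretization (or passing to such a subsequence, which preserves the homotopy class), one obtains $\mathbf M(T_i - \llbracket M\rrbracket) \to 0$. As a relative $(n+1)$-cycle in an $(n+1)$-manifold, the relative Constancy Theorem gives $T_i - \llbracket M\rrbracket = k_i \llbracket M\rrbracket$ for an integer $k_i$, and the identity $|k_i|\vol(M) = \mathbf M(T_i - \llbracket M\rrbracket) \to 0$ forces $k_i = 0$ for $i$ large. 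Therefore $F([\{\phi_i\}]) = \llbracket M\rrbracket$. The principal technical obstacle is the detailed bookkeeping required to verify that every discrepancy produced by the $\zeta$-normalization, by the $\mathcal F$-closeness in Theorem \ref{discrete}, and by the identification $\partial \llbracket \Omega_x\rrbracket \equiv \zeta(\Phi(x))$ lies in $\mathbf I_n(\partial M)$; this is precisely the condition under which the relative Constancy Theorem yields an integer multiple of $\llbracket M\rrbracket$, and once it is in hand the closed-case argument of Zhou adapts verbatim.
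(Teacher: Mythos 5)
Your overall scheme (Almgren fillings, comparison with the currents $\llbracket\Omega_y\rrbracket-\llbracket\Omega_x\rrbracket$, confinement of all discrepancies to $\mathbf I_n(\partial M)$, and the relative Constancy Theorem plus a mass bound to kill the integer multiple) is the same as the paper's, but the way you run the comparison has a genuine gap. You compare $T_i=\sum_x Q^{(i)}_{x,x+3^{-j_i}}$ with $\llbracket M\rrbracket$ \emph{globally}, using only property $(3)$ of Theorem \ref{discrete} (the $\mathcal F$-closeness $\mathcal F(\phi_i(x),\Phi(x))<\delta_i$ at every fine vertex) and filling the resulting errors edge by edge. Each per-edge error (and indeed each $Q^{(i)}_{x,y}$ itself) is only of size $O(\delta_i)$ (or a power thereof), and there are $3^{j_i}$ fine edges, so the total error is of order $3^{j_i}\delta_i$, which is not small: Theorem \ref{discrete} gives no relation of the form $\delta_i\ll 3^{-j_i}$, and none can be arranged, since in the construction a smaller fineness $\delta_i$ forces a \emph{larger} $j_i$ (the dependence runs in the wrong direction, and trying to prescribe $\delta_i$ in terms of $j_i$ is circular). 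Passing to a subsequence does not help either, because for each retained index the pair $(j_i,\delta_i)$ is unchanged. So the step ``$\mathbf M(T_i-\llbracket M\rrbracket)\to 0$'' is not justified, and this is exactly the estimate your argument hinges on.

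The paper avoids this accumulation by using a structural feature of the Li--Zhou discretization (\cite[Theorem 5.12]{LZ}, cf. \cite[Theorem 13.1]{MN14}) that your proposal never invokes: there is a coarser grid $I(1,k_i)$, with $j_i=k_i+l_i+1$, on whose vertices $\phi_i$ agrees \emph{exactly} with $\Phi$ as relative cycles, i.e. $\phi_i([s/3^{k_i}])=\llbracket\partial\Omega_{s/3^{k_i}}\rrbracket$ modulo $\mathbf I_n(\partial M)$. One then groups the fillings over the fine edges inside a single coarse cell $\alpha_j$ into $\tilde F(\alpha_j,\phi_i)$ and compares it with $\llbracket\Omega_{(j+1)/3^{k_i}}\rrbracket-\llbracket\Omega_{j/3^{k_i}}\rrbracket$: their boundaries agree modulo $\mathbf I_n(\partial M)$ with no $\mathcal F$-approximation error at all, the Constancy Theorem applies per coarse cell, and the only mass estimate needed is that $\mathbf M\big(\tilde F(\alpha_j,\phi_i)-\llbracket\Omega_{(j+1)/3^{k_i}}-\Omega_{j/3^{k_i}}\rrbracket\big)<\mathrm{vol}(M)$, which follows from the interpolation estimates internal to the construction of $\phi_i$ (together with $(sw2)$), not from a global $3^{j_i}\delta_i$ bound. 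Summing the resulting \emph{exact} identities over the coarse cells telescopes to $\llbracket M\rrbracket$. To repair your proof you should replace the global error bookkeeping by this cell-by-cell argument on the coarse grid, or else supply the finer interpolation estimates from the discretization construction that bound the total mass of the fillings within each coarse cell; the fineness bound $\mathbf f_{\mathbf M}(\phi_i)<\delta_i$ alone is not enough.
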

\begin{proof}
First we review the isomorphism $F:\pi_1^\sharp(\mathcal Z_n(M^{n+1},\partial M,\mathbf M),\{0\})\rightarrow H_{n+1}(M^{n+1},\partial M,\mathbb Z)$ by Almgren \cite[\S 3]{Alm62}. Take $\phi_i: I(1,j_i)_0\rightarrow\mathcal Z_n(M^{n+1},\partial M)$ to be the map constructed in \cite[Theorem 5.12]{LZ}. For any 1-cell $\beta\in I(1,j_i)_1$ with $\beta=[t_\beta^1,t_\beta^2]$, $\mathcal F(\phi_i(t_\beta^1),\phi_i(t_\beta^2))\leq\mathbf M(\phi_i(t_\beta^1),\phi_i(t_\beta^2))\leq\mathbf f_\mathbf M(\phi_i)\leq\delta_i$. Then by the $\mathbf M$-isoperimetric lemma \cite[Lemma 4.15]{LZ}, there exists an isoperimetric choice $Q_\beta\in \mathbf I_{n+1}(M^{n+1})$ with
\begin{itemize}
  \item $\partial Q_\beta=\zeta(\phi_i(t_\beta^2))-\zeta(\phi_i(t_\beta^1))+R_\beta$, for some $R_\beta\in\mathbf I_n(\partial M)$;
  \item $\mathbf M(Q_\beta)+\mathbf M(R_\beta)\leq C_M\mathcal F(\phi_i(t_\beta^1),\phi_i(t_\beta^2))$.
\end{itemize}
Then $F$ is defined by Almgren \cite[\S 3]{Alm62}:
\begin{equation}
F([\{\phi_i\}_{i\in \N}])=\sum_{\beta\in I(1,j_i)_1}\llbracket Q_\beta\rrbracket .
\end{equation}
Recall the construction of discretization in \cite[Theorem 5.12]{LZ} (see also \cite[Theorem 13.1]{MN14}): there exists $k_i, l_i>0$ such that $j_i=k_i+l_i+1$ and
\begin{itemize}
  \item $\phi_i([\frac{s}{3^{k_i}}])=\Phi(\frac{s}{3^{k_i}})=\pi(\llbracket \partial\Omega_{\frac{s}{3^{k_i}}}\rrbracket)$ in $\mathcal Z_n(M^{n+1},\partial M)$ for any positive integer $s$, that is, there exists $R\in\mathbf I_n(\partial M)$ such that
  \begin{equation}
      \phi_i([\frac{s}{3^{k_i}}])=\llbracket \partial\Omega_{\frac{s}{3^{k_i}}}\rrbracket+R;
  \end{equation}
  \item $\mathcal F(\phi_i(\frac{s}{3^{k_i}}),\phi_i(\frac{s+1}{3^{k_i}}))\leq\delta_i$;
\end{itemize}
For 1-cell $\alpha\in I(1,k_i)$, set
\begin{equation}
\tilde F(\alpha,\phi_i)=\sum_{\beta\in\alpha(l_i+1)_1}\llbracket Q_\beta\rrbracket .
\end{equation}
\begin{claim}
For any 1-cell $\alpha_j=[\frac{j}{3^{k_i}},\frac{j+1}{3^{k_i}}]$,
\begin{equation}
\tilde F(\alpha_j,\phi_i)=\llbracket \Omega_{\frac{j+1}{3^{k_i}}}\rrbracket -\llbracket \Omega_{\frac{j}{3^{k_i}}}\rrbracket ,
\end{equation}
in $\mathbf I_{n+1}(M^{n+1})$.
\end{claim}
Hence
\begin{equation}
F([\{\phi_i\}_{i\in\N}])=\sum_{\alpha\in I(1,k_i)_1}\tilde F(\alpha,\phi_i)=\llbracket M\rrbracket .
\end{equation}
To complete the proof of the theorem, it suffices to prove the claim. By direct computation,
\begin{align*}
\partial\tilde F(\alpha_j,\phi_i)&=\sum_{\beta\in\alpha_j(l_i+1)_1}\partial\llbracket Q_\beta\rrbracket \\
                                      &=\sum_{\beta\in\alpha_j(l_i+1)_1}\zeta(\phi_i(t_\beta^2))-\zeta(\phi_i(t_\beta^1))+R_\beta\\
                                      &=\zeta(\phi_i(\frac{j+1}{3^{k_i}}))-\zeta(\phi_i(\frac{j}{3^{k_i}}))+\sum_{\beta\in\alpha_j(l_i+1)_1} R_\beta\\
                                      &=\partial\llbracket\Omega_{\frac{j+1}{3^{k_i}}}\rrbracket-\partial\llbracket\Omega_{\frac{j}{3^{k_i}}}\rrbracket+R'\\
                                      &=\partial\llbracket\Omega_{\frac{j+1}{3^{k_i}}}-\Omega_{\frac{j}{3^{k_i}}}\rrbracket+R',
\end{align*}
for some $R'\in I_n(\partial M)$. Then by the Constancy Theorem \cite[Theorem 26.27]{Sim}, we obtain
\begin{equation}
\tilde F(\alpha_j,\phi_i)-\llbracket \Omega_{\frac{j+1}{3^{k_i}}}-\Omega_{\frac{j}{3^{k_i}}}\rrbracket =k\llbracket M\rrbracket ,
\end{equation}
for some $k\in\mathbb Z$. So we only need to show the mass of the left hand side is small, which comes from the construction of $\phi_i$ like the proof of \cite[Theorem 5.8]{LZ}  and we omit it here.
\end{proof}

Above all, given any $\Sigma\in\mathcal S_+\cup S_-$, let $\Phi^{\Sigma}:[0,1]\rightarrow(\mathcal Z_n(M^{n+1},\partial M,\{0\})$ be the continuous sweepout given by Proposition \ref{ori} and Proposition \ref{nonori}. Then apply the Theorem \ref{homotopy} to get a $(1,\mathbf M)$-homotopy sequence $\{\phi_i^{\Sigma}\}_{i\in\N}$ into $(\Z_n(M^{n+1},\partial M,\mathcal F),\{0\})$ and
\begin{itemize}
  \item They are all in the same homotopy class and $F([\{\phi_i\}])=\llbracket M\rrbracket $;
  \item $\mathbf L([\{\phi_i\}])\leq \area(\Sigma)$ if $\Sigma\in\mathcal S_+$;
  \item $\mathbf L([\{\phi_i\}])\leq 2\area(\Sigma)$ if $\Sigma\in\mathcal S_-$.
\end{itemize}

\section{Multiplicity and Orientation of non-orientable part}\label{Multiplicity and Orientation}
In this section, we show that the min-max minimal hypersurface corresponding to the fundamental class $[M]$ is orientable. In the first part, we show that the multiplicity of the non-orientable part is even. Then since we have good sweepout with multiplicity 2 for non-orientable hypersurface, we know the multiplicity of non-orientable part can only be 2 (see Section \ref{main proof} for more details). In the second part, we show that the non-orientable minimal hypersurfaces can not be produced by min-max theory. To show this, we construct a better one-family sweepout which is in the same homotopic class and has width less than double of the area of the non-orientable hypersurface. The two parts are inspired by Zhou \cite[Proposition 6.1]{Zho} and Ketover-Marques-Neves \cite{KMN}. For completeness of this paper, we put the details in Appendix and sketch the steps here.
\subsection{Multiplicity}
In this part, we discuss the multiplicity of the min-max free boundary minimal hypersurfaces. By the min-max theory for the compact manifolds with boundary, the stationary varifold is an integer multiple of some smooth minimal free boundary minimal hypersurface (denoted it by $\Sigma$).
\begin{proposition}\label{even multiple}
Let $\Sigma$ be the stationary varifold in Theorem \ref{main thm}, with $\Sigma=\cup_{i=1}^lk_i[\Sigma_i]$, where $\{\Sigma_i\}$ is a disjoint collection of smooth connected embedded free boundary minimal hypersurfaces with multiplicity $k_i\in\N$. If $\Sigma_i$ is non-orientable, then $k_i$ must be an even number.
\end{proposition}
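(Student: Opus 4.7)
The strategy is to analyze the limit current of a convergent subsequence of the min-max sequence and compare it with the limit varifold. Choose $x_i\in\dmn\phi_i$ with $\mathbf M(\phi_i(x_i))\to\mathbf L(\Pi)$ and set $T_i:=\zeta(\phi_i(x_i))$. By compactness for integral currents of bounded mass, pass to a subsequence with $T_i\rightharpoonup T$ in $\mathcal Z_n(M,\partial M)$, while the associated varifolds continue to satisfy $|T_i|\to V=\sum_i k_i|\Sigma_i|$. By Theorem \ref{homotopy}, in the interior of $M$ each $T_i$ coincides with $\partial\llbracket\Omega_i\rrbracket$ for some open set $\Omega_i\subset M$, so the support of $T_i$ carries multiplicity $\pm 1$ there and the limit $T$ is a relative cycle supported on $\bigsqcup_i\Sigma_i$.

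The first key step is to show that $T\llcorner\Sigma_i=0$ whenever $\Sigma_i$ is non-orientable. By integer rectifiability and the Constancy Theorem, in any small interior ball $B$ meeting only $\Sigma_i$, the restriction $T\llcorner B$ is of the form $\theta\,\llbracket\Sigma_i\cap B\rrbracket$ for a constant integer $\theta$ taken with respect to a local orientation of $\Sigma_i\cap B$. Covering $\Sigma_i$ by such balls and transporting $\theta$ along an orientation-reversing loop in $\Sigma_i$, the local orientation flips sign while $\theta$ must remain locally constant across overlaps; consistency therefore forces $\theta=-\theta$, hence $\theta=0$ and $T\llcorner\Sigma_i=0$.

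The second step compares the current and varifold limits near an interior point $p\in\Sigma_i\setminus\partial\Sigma_i$. Fix a small ball $B=B_r(p)$ disjoint from $\partial M$ and from the other components $\Sigma_j$, together with a local trivialization of the normal bundle of $\Sigma_i\cap B$. Since the limit varifold $V$ is a smooth embedded minimal hypersurface of density $k_i$ in $B$, an Allard-type regularity applied to the sequence (enabled by the almost-minimizing property of $T_i$ from the Li-Zhou min-max) shows that, for $i$ large, $T_i\llcorner B$ is a disjoint union of $k_i$ graphical sheets over $\Sigma_i\cap B$, each carrying orientation $\pm 1$ inherited from $\partial\llbracket\Omega_i\rrbracket$. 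Pushing forward under the normal projection $\pi:B\to\Sigma_i\cap B$ and using $T_i\llcorner B\to T\llcorner B=0$, the signed sum of sheet orientations vanishes, while the total sheet count equals $k_i$. Hence the number of positively oriented sheets equals the number of negatively oriented sheets, so $k_i=2m$ is even.

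The main obstacle will be establishing the smooth multi-sheet graphical structure of $T_i$ in $B$ with controlled orientations; this relies on an Allard-type regularity applied to the sequence, for which the almost-minimizing property of $T_i$ (and the smoothness of the limit varifold) is essential. A secondary technical point is the behavior near $\partial\Sigma_i\subset\partial M$, which can be handled by an analogous argument at free boundary points, or simply bypassed since $\Sigma_i$ is connected and the multiplicity $k_i$ is already determined by any interior point.
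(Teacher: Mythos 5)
Your overall skeleton (compare the limit current with the limit varifold, use non-orientability to force the current limit to vanish on $\Sigma_i$, then deduce parity of $k_i$) is the right one, and your first step agrees with the paper: the Constancy Theorem plus an orientation-reversing loop indeed forces the limit current to restrict to zero (hence to an even multiple) on a non-orientable component. The genuine gap is in your second step. The slices $T_i=\zeta(\phi_i(x_i))$ of the sweepout are just boundaries of open sets with no curvature or first-variation control whatsoever; the almost-minimizing property produced by the Li--Zhou min-max theory is a variational property of the limit varifold in small annuli, not a regularity statement about the individual $T_i$. Consequently there is no ``Allard-type regularity applied to the sequence'': varifold convergence $|T_i|\to k_i|\Sigma_i|$ near an interior point does not imply that $T_i\llcorner B$ decomposes into $k_i$ disjoint graphical sheets over $\Sigma_i\cap B$ for large $i$ (the slices can wiggle, form necks, etc., and Allard's theorem needs smallness of the first variation, which is unavailable). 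Since the whole point of the proposition is to show that the discrepancy between the current limit and the varifold limit is even, and arbitrary sequences of integral currents can lose any amount of multiplicity by cancellation, this step cannot be waved through; it is exactly where the analytic input must enter.

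The paper closes this gap differently: using the almost-minimizing property in an annulus containing $B(p,2r)$, it invokes the replacement construction of Li--Zhou (their Proposition 6.3) to produce currents $T_i\in\mathcal Z_n(M,\partial M)$ that are \emph{locally mass minimizing} in $B(p,r)$ and whose varifolds still converge to $|\Sigma|$ (the local lemmas of Zhou guarantee the replacement does not change the varifold there). For such a sequence the first variation is bounded, so White's theorem on currents and flat chains associated to varifolds applies and yields
\begin{equation*}
\Sigma\llcorner B(p,s)=\bigl[T_0\llcorner B(p,s)\bigr]+2W ,
\end{equation*}
i.e.\ the cancellation is even; combined with $T_0\llcorner\Sigma_1$ having even (in fact zero) multiplicity by non-orientability, this gives $k_1$ even. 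To repair your argument you should either pass to these locally mass-minimizing replacements before taking limits, or cite White's parity theorem for a sequence with uniformly bounded first variation; as written, the graphical multi-sheet decomposition of the raw sweepout slices is unjustified and is the step that would fail.
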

\begin{proof}[Outline of Proof of Proposition \ref{even multiple}]
We sketch the steps here and put the details in Appendix B. By Li-Zhou \cite[Theorem 5.21]{LZ}, $\Sigma$ is almost minimizing in small annuli with free boundary (with Definition 5.19 in \cite{LZ}). Let $\Sigma_1$ be a non-orientable component of $\Sigma$. Now taking $p$ in the interior of $\Sigma_1$, and $r>0$, we can find $T_i\in\mathcal Z_k(M,\partial M)$ such that
\begin{itemize}
  \item $T_i\llcorner B(p,r)$ is locally mass minimizing in $B(p,r)$;
  \item $\lim_{i\rightarrow\infty}|T_i|=|\Sigma|$ as varifold.
\end{itemize}
Then by Compactness Theorem for relative cycles \cite[Lemma 4.10]{LZ}, $T_i$ converges to $T_0\in\mathcal Z_k(M,\partial M)$ up to subsequence. $T_0$ here is in fact a integral cycle in $\cup_{i=1}^l\Sigma_i$, and hence the coefficient of the non-orientable part is even. Last, we shrink $r$ such that $\{T_i\}$ have bounded first variation to use White's Theorem \cite{Whi}. For more details, see Appendix.
\end{proof}

\subsection{Rule Out The Non-orientable Case}
In \cite[Theorem 3.5 and Theorem 4.1]{KMN}, Ketover-Marques-Neves ruled out the non-orientable part in closed manifolds by Catenoid estimates. Supposing that non-orientable hypersurface $\Sigma$ with multiplicity two is the min-max minimal hypersurface corresponding to the fundamental class, then one can always amend the sweepouts by add tubes to reduce the width of the sweepouts, contradicting with the assumptions. Recently, Haslhofer-Ketover \cite[\S 4]{HK} applied Catenoid estimate to give an upper bound of 2-width. Moreover, the idea still works for compact manifolds with assumptions in Theorem \ref{main thm}. We clarify the proposition below and put the constructions of sweepouts in Appendix C for completeness of this paper.

\begin{proposition}\label{rule out}
For any $(M^{n+1},\partial M,g)$ with convex boundary, and $3\leq n+1\leq 7$, then the min-max minimal hypersurface corresponding to the fundamental class $[M]$ can not be a non-orientable hypersurface with multiplicity $2$.
\end{proposition}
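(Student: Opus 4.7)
The strategy is a proof by contradiction. Suppose the min-max stationary varifold associated to the fundamental class $[M]$ is $V = 2[\Sigma]$ with $\Sigma \in \mathcal{S}_-$ non-orientable; by the summary at the end of Section \ref{Discretization} the width of the corresponding homotopy class $\Pi$ satisfies $\mathbf{L}(\Pi) = 2\area(\Sigma)$. My plan is to build a continuous family of hypersurfaces homotopic to the sweepout of Proposition \ref{nonori} whose supremal area is strictly less than $2\area(\Sigma)$; then Theorem \ref{discrete} and Theorem \ref{homotopy} produce a $(1,\mathbf{M})$-homotopy sequence representing $[M]$ with strictly smaller width, contradicting the minimality of $\mathbf{L}(\Pi)$ and ruling out the non-orientable multiplicity-two case.

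The starting point is the non-orientable sweepout $\{\Sigma_t\}$ of Proposition \ref{nonori}, obtained by lifting to the orientable double cover $\bar M$ and using level sets of the signed distance $d$ to $\tilde\Sigma$. As $t \to 0$ the lifted slices degenerate to two copies of $\Sigma$, and for small $t > 0$ they look like two nearly parallel sheets separated by distance of order $t$. The second free variation formula of Corollary \ref{variation}, together with $\ric \geq 0$ and convexity of $\partial M$, gives
\begin{equation*}
\frac{d^2}{dt^2}\area(d^{-1}(t))\Big|_{t=0} = -\int_\Sigma\bigl(|A|^2 + \ric(\n,\n)\bigr) - \int_{\partial\Sigma}h^{\partial\Sigma}(\n,\n) \leq 0,
\end{equation*}
and strict negativity of at least one term yields a quadratic area bound $\area(\Sigma_t) \leq 2\area(\Sigma) - c_0 t^2$ for small $t$ and some $c_0 > 0$; the degenerate case in which all three contributions vanish identically forces strong rigidity (for example $\Sigma$ totally geodesic with $\ric(\n,\cdot)\equiv 0$ and $\partial M$ flat along $\partial\Sigma$) and can be handled separately.

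For the local modification I plan to adapt the Ketover--Marques--Neves catenoid estimate. Fix an interior point $p \in \Sigma$ and a small geodesic ball $B_r(p) \subset M \setminus \partial M$ in which the ambient metric is $C^2$-close to Euclidean and $\Sigma \cap B_r(p)$ close to a hyperplane. Inside $B_r(p)$ I will replace the two nearby sheets of the lifted sweepout by a $\mathbb{Z}/2$-symmetric one-parameter family that passes through a thin catenoidal neck of waist $\rho$; outside $B_r(p)$ the sweepout is unchanged. For $2 \leq n \leq 6$, the catenoid estimate bounds the extra area introduced inside $B_r(p)$ by $C \rho^{n-1} |\log \rho|$ when $n=2$ and by $C \rho^{n-1}$ when $n \geq 3$; taking $\rho$ small compared with $t$ makes this cost negligible next to the quadratic savings $c_0 t^2$, so the resulting family has strictly smaller supremal area. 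The construction descends from $\bar M$ to $M$ by the $\mathbb{Z}/2$-symmetry.

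The main obstacle is ensuring that the modified continuous family is actually a bona fide sweepout of $(M,\partial M)$ in the sense of Definition \ref{def of sweepout}, and that its discretization via Theorem \ref{discrete} lies in the homotopy class $\Pi$ identified with the fundamental class by Theorem \ref{homotopy}. Continuity in $\mathcal{F}$ and in Hausdorff distance follows from the compactly supported nature of the deformation and the continuous dependence on the waist and interpolation parameters; the homotopy class is preserved because outside $B_r(p)$ the slices still bound the same relative open sets. Since $B_r(p)$ lies in the interior, the free boundary of $M$ plays no role in the catenoid step, and the construction reduces essentially to that of \cite{KMN}, which is why the paper defers the detailed verification to Appendix C.
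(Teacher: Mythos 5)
Your overall route is the same as the paper's (argue by contradiction via the orientable double cover, use the second free variation to get the quadratic decay $\mathcal H^n(\bar\Sigma_t)\leq 2\area(\Sigma)-At^2$, perform a Ketover--Marques--Neves type neck surgery at an interior point $p$ and at $\tau(p)$, and then discretize through Theorems \ref{discrete} and \ref{homotopy} to contradict $\mathbf L(\Pi_M)=2\area(\Sigma)$), but the quantitative heart of the surgery step has a genuine gap. Your accounting is: at the slice of parameter $t$ the unmodified family already saves $c_0t^2$, a neck of waist $\rho\ll t$ costs less than this, ``so the resulting family has strictly smaller supremal area.'' That last inference fails: both the savings $c_0t^2$ and your neck cost tend to $0$ as $t\to 0$, so the modified slices still have areas converging to $2\area(\Sigma)$ at the degenerate end, and the supremum over the family is still exactly $2\area(\Sigma)$ --- no improvement over the sweepout of Proposition \ref{nonori}, whose slices already all have area strictly below $2\area(\Sigma)$. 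What is needed, and what the paper's Appendix C actually does, is a restructuring of the family near $t=0$ in two regimes: first open a hole/neck of \emph{fixed} outer radius $R$ at fixed sheet separation $\alpha$ (cylinder or log cut-off catenoid cost at most $\frac{A}{2}\alpha^2$, beaten by the savings $A\alpha^2$), then keep the hole of radius $R$ and shrink the separation to $0$, so the bottom slice is $2\Sigma$ minus two definite disks, of area at most $2\area(\Sigma)-cR^n$. The uniform gap $\bar\epsilon=\min\{\epsilon,\,cR^n,\,\frac{A}{2}\alpha^2\}$ comes from the removed disks at the bottom of the family, not from letting the waist shrink with $t$; without this your modified family does not have strictly smaller width.

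Two smaller points. For $n=2$ the cost of a genuine catenoidal neck of waist $\rho$ spanning separation of order $t$ is not $C\rho|\log\rho|$: a catenoid whose waist is much smaller than its height flares exponentially, so its area is not small at all. The correct estimate (used in Appendix C via the logarithmic cut-off $\eta_{r,R}$) is an area excess of order $\frac{C\alpha^2}{\log(R/r)}$, quadratic in the separation $\alpha$ with a constant made small by fixing $R$ and shrinking $r$; this suffices to beat $A\alpha^2$, but it is not negligible in the $o(t^2)$ sense you invoke. Finally, your ``degenerate case'' of the second variation does not arise and needs no separate rigidity argument: since $\partial M$ is convex, $h^{\partial M}(\n,\n)>0$, and every free boundary minimal hypersurface here has non-empty boundary (Remark \ref{remark1}), so the term $\int_{\partial\Sigma}h^{\partial M}(\n,\n)$ is strictly positive and already gives the constant $A>0$ in the quadratic decay.
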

\begin{proof}
Suppose that $\Sigma$ is non-orientable and $2\Sigma$ is the min-max free boundary minimal hypersurface corresponding to the fundamental class $[M]$ in $H_{n+1}(M,\partial M,\mathbb Z)$. Notice that $\Sigma$ does not separate $M$. Let $\bar M$ be the double cover of $M$, $\tau:\bar M\rightarrow\bar M$ is the covering map,  $\{\bar\Sigma_t\}_{t\in[-1,1]}$ is the sweepout of $\bar M$ in Proposition \ref{ori}. Now by the second variation of the area formula,
\begin{equation*}
\area(\bar\Sigma_s)=2\area(\Sigma)-\frac{s^2}{2}(\int_{\bar\Sigma_0}(|A|^2+\ric(\n,\n))+\int_{\partial\Sigma_0}h(\n,\n)) +O(s^3).
\end{equation*}
Hence for $s$ small enough, there exists $A>0$ so that
\begin{equation}
\mathcal H^n(\bar\Sigma_s)\leq 2\area(\Sigma)-As^2.
\end{equation}

In Proposition \ref{nonori}, we constructed the sweepout $\{\Sigma_t\}_{t\in[0,1]}$ of $M$ from sweepout $\{\bar\Sigma_t\}_{t\in[0,1]}$ of $\bar M^+$ in Proposition \ref{ori}. We now will amend the sweepout $\{\bar\Sigma_t\}_{t\in[0,1]}$ above to produce a sweepout $\{\Lambda'_t\}_{t\in[0,1]}$ of $M$ which satisfies $\mathbf L(\{\Lambda'_t\})<\mathbf L(\{\Sigma_t\})$. By above area formula, for any $\delta>0$, there exists $\epsilon>0$ so that
\begin{equation}\label{depart estimate}
\sup_{t\in[\delta,1]}\mathcal H^n(\bar\Sigma_t)\leq\mathcal H^n(\bar\Sigma_0)-\epsilon.
\end{equation}
Similar to the proof in \cite{KMN},
\begin{enumerate}
  \item $\Lambda_t=\bar\Sigma_t$ for $t>\delta$;
  \item for $t\in[0,\delta]$, $\Lambda_t$ open up via cylinders (or Catenoid for $n=2$) at some points.
\end{enumerate}
By Appendix C, the amended sweepout $\{\Lambda_t'\}$ satisfies
\begin{equation}
    \mathcal H^n(\Lambda'_t)\leq 2\area(\Sigma)-\frac{A}{2}\alpha^2-cr^2, t\in [0,\delta],
\end{equation}
where $\alpha$ is the height of cylinder or catenoid, $r$ is the radius of intersection of cylinder and $\Sigma$, $A,c$ are constants. Taking $\epsilon'=\min\{\epsilon,A\alpha^2,2cr^2\}$, then
\begin{equation}
    \mathcal H^n(\Lambda'_t)\leq 2\area(\Sigma)-\frac{\epsilon'}{2}.
\end{equation}
This contradicts with the choice of $\Sigma$. We put more details in Appendix C.
\end{proof}

\section{Proof of the main theorem}\label{main proof}
Now we can prove our main results:
\begin{proof}[Proof of Theorem \ref{main thm}]
For any $\Sigma\in\mathcal S_+\cup\mathcal S_-$, take $\Phi^{\Sigma}$ as the continuous sweepouts given by Proposition \ref{ori} and Proposition \ref{nonori} and let $S_{\Sigma}=\{\phi_i^{\Sigma}\}_{i\in\N}$ be the corresponding $(1,\mathbf M)$-homotopy sequence. After Theorem \ref{homotopy}, we summarized that all $S_{\Sigma}$ lie in the same homotopy class $F^{-1}(\llbracket M\rrbracket )$. Let us denote this homology class by $\Pi_M$, then
\begin{equation}
\mathbf L(\Pi_M)\leq\mathcal A(M,\partial M),
\end{equation}
where $\mathcal A(M,\partial M)$ is defined by
\begin{equation}
\mathcal A(M,\partial M):=\inf(\{\area(\Sigma)|\ \Sigma\in S_+\}\cup\{2\area(\Sigma)|\ \Sigma\in S_-\}).
\end{equation}
By min-max Theorem for compact manifold with boundary developed by Li-Zhou \cite[Theorem 5.21, Theorem 6.2]{LZ}, there exists a stationary integral varifold $\Sigma$, which supported on a free boundary minimal hypersurface $\Sigma_0$, such that $\mathbf L(\Pi_M)=\Vert \Sigma\Vert (M)$. Notice that $\Sigma_0$ must be connected since $(M,\partial M)$ has positive Ricci curvature and convex boundary. Hence $\Sigma=k\llbracket \Sigma_0\rrbracket $ for some $k\in\N$, $k\neq 0$. By the definition of $\mathcal A(M,\partial M)$,
\begin{itemize}
  \item if $\Sigma_0\in\mathcal S_+$, then $k\leq 1$ and hence $k=1$, $\area(\Sigma_0)=\mathcal A(M,\partial M)$;
  \item if $\Sigma_0\in\mathcal S_-$, then $k\leq 2$ and must be even by Proposition \ref{even multiple}, hence $k=2$ and $\mathcal A(M,\partial M)\leq 2\area(\Sigma_0)\leq \mathcal A(M,\partial M)$, this implies $\mathcal A(M,\partial M)=2\area(\Sigma_0)$.
\end{itemize}
However, by Theorem \ref{rule out}, the second case can not happen. Hence we have proved the min-max minimal hypersurface corresponding to the fundamental class is orientable with multiplicity one and $\area(\Sigma_0)=\mathcal A(M,\partial M)$. Now the only thing we need to show is that $\text{index}(\Sigma_0)=1$. We will use the same arguments with \cite{MN14} and \cite[Claim 5]{Zho}. Let $\{\Sigma_t\}_{t\in[-1,1]}$ be the sweepout which we constructed in Proposition \ref{ori}, then there is a family of diffeomorphisms of $\tilde M$ corresponding to $X\in\mathfrak X(\tilde M)$ such that
\begin{itemize}
  \item $X|_{\Sigma_0}$ is the normal vector field;
  \item $\Sigma_t=F_t(\Sigma_0)\cap M$ for $t\in[-\epsilon,\epsilon]$.
\end{itemize}

Supposing that $\text{index}(\Sigma_0)\geq 2$, then there exists a function $u\neq0$ such that $Q(1,u)=0$, and $Q(u,u)<0$ where
\begin{equation}
Q(f,g)=\int_{\Sigma_0}\langle\nabla^{\Sigma_0}f,\nabla^{\Sigma_0}g\rangle-(|A^{\Sigma_0}|^2+\ric_M(\n,\n))fg- \int_{\partial\Sigma_0}h^{\partial M}(\n,\n)fg,
\end{equation}
$\n$ is the unit normal vector field of $\Sigma_0$ in $M$, $A$ is the corresponding second fundamental form, and $h$ is the second fundamental form on $\partial M$ in $M$. Here $h^{\partial M}(\n,\n)>0$ since $\partial M$ is convex.

Let $\tilde X\in\mathfrak X(\tilde M)$ be the extension of $u\n$ and $\{\tilde F_s\}_{s\in[-\epsilon',\epsilon']}$ be the corresponding family of diffeomorphisms of $\tilde M$. Set $\Sigma_{s,t}=\tilde F_s(\Sigma_t)$ and $\tilde f(s,t) =\mathcal H^n(\Sigma_{s,t})$. Then $\nabla\tilde f(0,0)=0$ since $\Sigma_0$ is stationary. Furthermore,
\begin{gather*}
\frac{\partial^2}{\partial s\partial t}\tilde f(0,0)=Q(1,u)=0,\\
\frac{\partial^2}{\partial s^2}\tilde f(0,0)=Q(u,u)<0,\\
\frac{\partial^2}{\partial t^2}\tilde f(0,0)=Q(1,1)<0.
\end{gather*}
So there exists $\delta>0$ such that $\tilde f(\delta,t)<f(0,0)$ for all $t\in[-1,1]$. By Theorem \ref{discrete} and Theorem \ref{homotopy}, we can construct a $(1,\mathbf M)$-homotopy sequence $\{\phi_i^{\delta}\}_{i\in\N}\in\Pi_M$ such that
\begin{equation}
\mathbf L(\{\phi_i^{\delta}\})\leq\sup_{t\in[-1,1]}\tilde f(\delta,t)<\tilde f(0,0)=\area(\Sigma_0)=\mathcal A(M,\partial M),
\end{equation}
which contradicts with $\mathbf L(\Pi_M)=\mathcal A(M,\partial M)$. Hence $\text{index}(\Sigma_0)=1$.
\end{proof}

\vspace{2em}
\appendix
\renewcommand{\appendixname}{Appendix~\Alph{section}}

\section*{Appendix A: Non-existence of closed minimal hypersurfaces}\label{app non-exist of closed}
We give the proof of the claim in Remark \ref{remark1}:
\begin{proof}[Proof of the statement in Remark \ref{remark1}]
Supposing that $\Sigma\subset M$ is a closed minimal hypersurface in $M$. Let $x\in\Sigma$ and $y\in\partial M$ such that
\begin{equation}
d(:=d(x,y))=d(\Sigma,\partial M),
\end{equation}
and $\gamma(t)$ is the distance geodesic between $x$ and $y$. It follows that $\gamma'(0)\perp\Sigma$ and $\gamma'(d)\perp\partial M$. Let $\{e_i\}_{i=1}^n\cup \{\gamma'(0)\}$ be the orthonormal basis of $T_x M$, then extending them to the vector fields on $\gamma(t)$ by parallel moving, which still denoted by $\{e_i\}$. Now computing the second variation of these vector fields:
\begin{equation}
0\leq\delta^2L(\gamma)(e_i,e_i)\leq-\int_{\gamma}R(\gamma'(t),e_i,e_i,\gamma'(t))dt-A^{\partial M}(e_i,e_i)-A^{\Sigma}(e_i,e_i),
\end{equation}
then taking the sum,
\begin{equation}
0\leq -\int \ric(\gamma'(t),\gamma'(t))-H^{\partial M},
\end{equation}
which contradicts with the non-positive Ricci curvature and convex boundary conditions.
\end{proof}

\vspace{1em}
\section*{Appendix B: Even multiplicity of non-orientable part}\label{app even multiplicity}
\begin{proof}[Proof of Proposition \ref{even multiple}]
By Theorem $5.21$ in \cite{LZ}, $\Sigma$ is almost minimizing in small annuli with free boundary (with Definition 5.19 in \cite{LZ}). Let $\Sigma_1$ be a non-orientable component of $\Sigma$. Now take $p$ in the interior of $\Sigma_1$, and $r>0$ such that
\begin{itemize}
  \item $B(p,2r)$ is contained in some $A(p',s,r_{p'})$;
  \item $\Sigma$ is almost minimizing in $A(p',s,r_{p'})$;
  \item $\mathrm{spt}(\Vert \Sigma\Vert )\cap B(p,2r)=\mathrm{spt}(\Vert \Sigma_1\Vert )\cap B(p,2r)$ is diffeomorphic to an $n$-ball.
\end{itemize}
Hence by \cite{LZ} Proposition 6.3 , for any $K\subseteq B(p,r)$, there exists varifold $V^*$ (called replacement of $\Sigma$ in $K$), and a sequence of $T_i\in\mathcal Z_k(M,\partial M)$ such that
\begin{itemize}
  \item $T_i\llcorner B(p,r)$ is locally mass minimizing in $B(p,r)$;
  \item $\lim_{i\rightarrow\infty}|T_i|=V^*$ as varifold.
\end{itemize}
Attention that in \cite{LZ} Theorem $6.3$, $T_i\in\mathcal Z_k(M,M\setminus B(p,2r))$ and the convergence is in the sense of $\mathbf F_{B(p,2r)}$-metric. However, by step 2 in the proof of \cite[Theorem 6.3]{LZ}, we can see our statements here is also true.

Since the Lemma $6.7$, Lemma $6.9$, Lemma $6,11$ in \cite{Zho} are local propositions, it still holds in this case, that is, we still have $V^*=|\Sigma|$ as varifold. Now since $T_i\in\mathcal Z_k(M,\partial M)$ and $\mathbf M(T_i)$ are uniformly bounded, then the Compactness Theorem for relative cycles (see \cite{LZ} Lemma 4.10) implies there exists a subsequence, still denoted by $\{T_i\}$, converges to some $T_0\in\mathcal Z_k(M,\partial M)$. However, the associated varifolds $|T_i|$ converge to $[\Sigma]$, hence $\mathrm{spt} T_0\subseteq \cup_{i=1}^l \Sigma_i$. Moreover, we have
\begin{claim}
$T_0$ is a relative integral cycle in $\cup_{i=1}^l\Sigma_i$.
\end{claim}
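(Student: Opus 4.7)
The claim has two parts that need to be verified: first, membership $T_0 \in \mathcal Z_n(M,\partial M)$; second, the support inclusion $\mathrm{spt}(T_0) \subseteq \cup_{i=1}^l \Sigma_i$. The plan is to derive the first from the flat compactness already set up, and the second from combining this flat convergence with the varifold convergence $|T_i| \to |\Sigma|$.

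The membership statement is immediate from the Compactness Theorem for relative cycles (Lemma 4.10 in Li-Zhou). Indeed, varifold convergence gives $\mathbf M(T_i) = \|T_i\|(M) \to \|\Sigma\|(M) < \infty$, so the sequence is uniformly mass-bounded, and the space $\mathcal Z_n(M,\partial M)$ is closed under flat convergence of such sequences. For the support inclusion, I would pick any point $x_0 \in M \setminus \cup\Sigma_i$ and a radius $\rho > 0$ so that $B(x_0,\rho) \cap \cup\Sigma_i = \emptyset$, which forces $\|\Sigma\|(B(x_0,\rho)) = 0$. Weak-$\ast$ convergence of the Radon measures $\|T_i\|$ (a consequence of the varifold convergence) yields $\|T_i\|(B(x_0,\rho')) \to 0$ for any $\rho' < \rho$ with $\|\Sigma\|(\partial B(x_0,\rho')) = 0$. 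Lower semicontinuity of mass under flat convergence then gives $\mathbf M(T_0 \llcorner B(x_0,\rho')) = 0$, so $x_0 \notin \mathrm{spt}(T_0)$.

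The main subtlety I expect to confront is the equivalence-class nature of relative cycles: $T_0$ is an element of $\mathcal Z_n(M,\partial M) = Z_n(M,\partial M)/\mathbf I_n(\partial M)$, not a single current, so the support assertion must be read in terms of the canonical representative $\zeta(T_0)$ satisfying $\zeta(T_0)\llcorner \partial M = 0$. The interior argument above handles the portion of $\mathrm{spt}(\zeta(T_0))$ away from $\partial M$, and any boundary portion lies in $\cup \partial\Sigma_i \subseteq \cup \Sigma_i$ automatically, since the free-boundary hypersurfaces $\Sigma_i$ meet $\partial M$ exactly along $\partial\Sigma_i$. Once the claim is in hand, the evenness conclusion of Proposition \ref{even multiple} follows by slicing $T_0$ transversally over each $\Sigma_i$: locally the restriction is $m_\alpha \llbracket \Sigma_i \rrbracket$ with $m_\alpha \in \mathbb Z$, and for non-orientable $\Sigma_i$ consistency of the orientation across an orientation-reversing loop forces $m_\alpha$ to factor through the orientation double cover, hence to be even.
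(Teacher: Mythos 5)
Your proposal is correct and follows essentially the same route as the paper: the paper likewise obtains $T_0\in\mathcal Z_n(M,\partial M)$ from the Li--Zhou compactness theorem for relative cycles and deduces $\mathrm{spt}(T_0)\subseteq\cup_{i=1}^l\Sigma_i$ from the varifold convergence $|T_i|\to|\Sigma|$, with your localized lower-semicontinuity argument and the remark on the canonical representative $\zeta(T_0)$ merely filling in details the paper leaves implicit. (Your closing paragraph on evenness goes beyond the stated claim; the paper handles that step instead by noting an odd multiple of a non-orientable $\Sigma_i$ cannot be a relative integral cycle and then invoking White's theorem.)
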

By the Constancy Theorem (\cite{Sim} Theorem 26.27), $T_0=\sum_{i=1}^lk_i'\llbracket \Sigma_i\rrbracket ,$ for some $k_i'\in\mathbb Z$. The lower semi-continuity of the mass implies $|k_i'|\leq k_i$. The key point here is that $k_1'$ must be even, or $k_1'\Sigma_1$ can not represent a relative integral cycle.

Now we focus on the ball $B(p,s)$. We can shrink the radius slightly such that $\partial(T_i\llcorner B(p,s))$ have uniformly bounded mass (by slicing theory, \cite{Sim} 28.5), and hence in the sense of subsequence, converge to some limit current. Since $T_i$ are locally mass minimizing in $B(p,r)$, we can apply the Theorem 6.12 in \cite{Whi} by White,
\begin{equation}
\Sigma\llcorner B(p,s)=[T_0\llcorner B(p,s)]+2W,
\end{equation}
hence $k_1$ is even.
\end{proof}

\vspace{1em}
\section*{Appendix C: Construction of new sweepout}\label{app new sweepout}
\begin{proof}[Construction in Proposition \ref{rule out}]
Now we amend the sweepouts in two different cases:

The First Part : $3\leq n\leq 6$.

We will open up via cylinders in $\bar M$ and make them invariant under the covering deformation and hence it will be a sweepout of $M$. For any $p\in\bar\Sigma_0$, let
\begin{gather*}
\mathcal C_{r,\alpha}(p):=\{\text{exp}_x(t\n(x)):x\in\partial B_r(p)\cap\bar\Sigma_0,t\in[-\alpha,\alpha]\},\\
B_{r,\alpha}:=\{\text{exp}_x(\alpha\n(x)):x\in B_r(p)\cap\bar\Sigma_0\}
\end{gather*}
Now for fixed $p\in\bar\Sigma_0,R>0, \alpha>0$, there exists $c,C>0$ so that for any $r<R, |\alpha'|<\alpha$,
\begin{gather*}
c|\alpha'| r^{n-1}\leq\mathcal H^n(\mathcal C_{r,\alpha}(p))\leq C|\alpha'| r^{n-1},\\
cr^n\leq\mathcal H^n(B_{r,\alpha}(p))\leq Cr^n.
\end{gather*}
Now let us denote
\begin{equation}
\Lambda_{r,\alpha}:=(\bar\Sigma_\alpha\setminus(B_{r,\pm\alpha}(p)\cup B_{r,\pm\alpha}(\tau(p))))\cup\mathcal C_{r,\alpha}(p)\cup\mathcal C_{r,\alpha}(\tau(p)),
\end{equation}
It follows that
\begin{equation}
\mathcal H^n(\Lambda_{r,\alpha})\leq 2\mathcal H^n(\bar\Sigma_0)+2C\alpha r^{n-1}-2c r^n-A\alpha^2.
\end{equation}
Since $n>2$, we can shrink $\alpha\ll1$(only depends on $C,c$ and $A$) such that for all $\alpha'\leq \alpha$,
\begin{equation}
2C\alpha' r^{n-1}-c r^n\leq\frac{A}{2}\alpha'^2,
\end{equation}
hence,
\begin{equation}
\mathcal H^n(\Lambda_{r,\alpha'})\leq 2\mathcal H^n(\bar\Sigma_0)-c r^n-\frac{A}{2}\alpha'^2.
\end{equation}
For fixed $\alpha$ above, let $R<\alpha$, define the sweepout
\begin{equation}
\Lambda_t=\left\{\begin{aligned}
         &\Lambda_{R,\frac{2t\alpha}{\delta}},&t\in\big(0,\frac{\delta}{2}\big]\ \\
         &\Lambda_{2R-\frac{2Rt}{\delta},\alpha},\ &t\in\big(\frac{\delta}{2},\delta\big].
         \end{aligned}\right.
\end{equation}
By the estimates above,
\begin{gather*}
\mathcal H^n(\Lambda_{R,\beta})\leq2\mathcal H^n(\bar\Sigma_0)-cR^n,\\
\mathcal H^n(\Lambda_{r,\alpha})\leq2\mathcal H^n(\bar\Sigma_0)-\frac{A}{2}\alpha^2,
\end{gather*}
If we choose $\bar\epsilon=\min\{\epsilon,cR^n,\frac{A}{2}\alpha^2\}$(where $\epsilon$ is the constant in equation (\ref{depart estimate})), we obtain that for all $t\in[0,1]$,
\begin{equation}
\mathcal H^n(\Lambda_t)\leq2\mathcal H^n(\bar\Sigma_0)-\bar\epsilon.
\end{equation}
Since $\Lambda_t$ are all invariant under the covering deformation, we can get a sweepout $\{\Lambda'_t\}$ of $M$ by quotient the covering deformation like what we did in Proposition \ref{nonori}, and the new sweepout would satisfies
\begin{equation}
\mathcal H^n(\Lambda'_t)\leq 2\area(\Sigma)-\frac{\bar\epsilon}{2}.
\end{equation}
This contradicts with the assumption of $\Sigma$.

\

The Second Part: $n=2$.

We will use the following logarithmically cut-off function near $p$:
\begin{equation}
\eta_{r,R}(x)=\left\{\begin{aligned}
          &1 &d(x,p)\geq R\\
          &(\log r-\log d(x,p))/(\log (r)-\log R) &r\leq d(x,p)\leq R\\
          &0 &d(x,p)\leq r
          \end{aligned}\right.
\end{equation}
Instead of the cylinder in the case of $n\geq 3$, we will use the following surfaces which are like the catenoid:
\begin{equation}
\mathcal D_{r,R}^\alpha(p):=\{\text{exp}_{x}(\pm\alpha\eta_{r,R}\n(x)):x\in \big(B_R(p)\setminus B_r(p)\big)\cap \bar\Sigma_0\}.
\end{equation}
Then define the surface
\begin{equation}
\Lambda_{r,R}^\alpha=\big(\Sigma_{\pm\alpha}\setminus\big(B_{R,\pm\alpha}(p)\cup B_{R,\pm\alpha}(\tau(p))\big)\big)\cup\mathcal D_{r,R}^{\pm\alpha}(p)\cup\mathcal D_{r,R}^{\pm\alpha}(\tau(p)).
\end{equation}
By \cite{KMN} Proposition 2.5,
\begin{align*}
\mathcal H^2(\mathcal D_{r,R}^\alpha(p))\leq &2\mathcal H^2(B_R(p)\setminus B_r(p))+\alpha^2\int_{B_R(p)\setminus B_r(p)} |\nabla\eta_r|^2\\
      &+C\alpha^3\int_{B_R(p)}(1+(|\nabla\eta|^2)),
\end{align*}
hence
\begin{align*}
\mathcal H^2(\Lambda_{r,R}^\alpha)\leq &\mathcal H^2(\bar\Sigma_{\pm\alpha}\setminus B_{R,\pm\alpha}(p))+2\mathcal H^2(B_R(p)\setminus B_r(p))\\
&+2\mathcal H^2(B_R(p)\setminus B_r(\tau(p)))+\frac{D\alpha^2}{\log(R/r)}.
\end{align*}
We can choose $\bar R$ small enough so that there exists $A,c,C$, for $R\leq\bar R$,
\begin{gather*}
\mathcal H^2(\bar\Sigma_{\pm\alpha}\setminus B_{R,\pm\alpha}(p))\leq 2\mathcal H^2(\bar\Sigma_0\setminus B_R(p))-A\alpha^2,\\
cR^2\leq\mathcal H^2(B_R(p))\leq CR^2.
\end{gather*}
It follows that
\begin{equation}
\mathcal H^2(\Lambda_{r,R}^\alpha)\leq 2\mathcal H^2(\bar\Sigma_0)-2A\alpha^2-2cr^2+\frac{D\alpha^2}{\log(R/r)}.
\end{equation}
Now fix $R,r\leq \bar R$ such that
\begin{equation}
\frac{D}{\log(R/r)}\leq A,
\end{equation}
then we have the area estimate
\begin{equation}
\mathcal H^2(\Lambda_{r,R}^\alpha)\leq 2\mathcal H^2(\bar\Sigma_0)-A\alpha^2-2cr^2.
\end{equation}
Now define the sweepout
\begin{equation}
\Lambda_t=\left\{\begin{aligned}
          &\Lambda_{r,R}^{2t\alpha/\delta} &t\in(0,\frac{\delta}{2}]\\
          &\Lambda_{\frac{2rt}{\delta}-2r,\frac{2Rt}{\delta}-2R}^\alpha &t\in(\frac{\delta}{2},\delta].
          \end{aligned}\right.
\end{equation}
Take $\epsilon'=\min\{\epsilon,A\alpha^2,2cr^2\}$, then
\begin{equation}
\mathcal H^2(\Lambda_t)\leq 2\mathcal H^2(\bar\Sigma_0)-\epsilon',
\end{equation}
Since the sweepout is invariant under the covering deformation, we can construct the sweepout $\{\Lambda'_t\}$ of $M$ by quotient the covering deformation, and we will have the estimate
\begin{equation}
\mathcal H^2(\Lambda'_t)\leq 2\mathcal H^2(\Sigma)-\frac{\epsilon'}{2},
\end{equation}
\end{proof}

\bibliographystyle{amsbook}

\end{document}